\newtheorem{theorem}{Theorem}
\newtheorem{definition}[theorem]{Definition}
\newtheorem{lemma}[theorem]{Lemma}
\newcommand{\ve}{\varepsilon}
\def \n  {\nonumber}
\def \mc{\mathcal}
\def \ms{\mathscr}
\def \mf{\mathfrak}
\def \ra{\rightarrow}
\def \wt{\widetilde}
\def \mb{\mathbb}
\def \ve{\varepsilon}
\def \ov{\overline}
\title{\vspace{-0.in}\parbox{\linewidth }{\footnotesize\noindent
} \\  \bf Approximate controllability of second-order evolution differential inclusions in Hilbert spaces}
\author{\\ N.I. Mahmudov\\ \small {Department of Mathematics, Eastern Mediterranean University, Gazimagusa, }\\ \small{T.R. North Cyprus, Mersin 10, Turkey. email: nazim.mahmudov@emu.edu.tr} \\
V. Vijayakumar\\ \small {Department of Mathematics,  Info Institute of Engineering, Kovilpalayam, }\\  \small{Coimbatore - 641 107, Tamil Nadu, India. email: vijaysarovel@gmail.com}\\
R. Murugesu \\ \small {Department of Mathematics,  SRMV College of Arts and Science,}\\ \small{Coimbatore - 641 020, \ Tamil Nadu, India. \ email: arjhunmurugesh@gmail.com}}
\date{ }
\begin{document}

 \maketitle \footnotetext[1]{ Corresponding author: N.I. Mahmudov}
\author{ \ }

\begin{abstract}
In this paper, we consider a class of second-order evolution differential inclusions in Hilbert spaces. This paper deals with the approximate controllability for a class of second-order control systems. First, we establish a set of sufficient conditions for the approximate controllability for a class of second-order evolution differential inclusions in Hilbert spaces. We use Bohnenblust-Karlin's fixed point theorem to prove our main results. Further, we extend the result to study the approximate controllability concept with nonlocal conditions and extend the result to study the approximate controllability for impulsive control systems with nonlocal conditions. An example is also given to illustrate our main results.
\end{abstract}

{\bf Keywords:} Approximate controllabiliy, Second order differential inclusions, Cosine function of operators, Impulsive systems, Evolution equations, Nonlocal conditions.

{\bf 2010 Mathematics Subject Classification:} 26A33, 34B10, 34K09, 47H10.

\section{Introduction}
\noindent

Controllability is one of the elementary concepts in mathematical control theory. This is a qualitative property of dynamical control systems and it is of particular importance in control theory. Roughly speaking, controllability generally means, that it is possible to steer dynamical control system from an arbitrary initial state to an arbitrary final state using the set of admissible controls. Most of the criteria, which can be met in the literature, are formulated for finite dimensional systems. It should be pointed out that many unsolved problems still exist as far as controllability of infinite dimensional systems are concerned.

In the case of infinite dimensional systems two basic concepts of controllability can be distinguished which are exact and approximate controllability. This is strongly related to the fact that in infinite dimensional spaces there exist linear subspaces, which are not closed. Exact controllability enables to steer the system to arbitrary final state while approximate controllability means that system can be steered to arbitrary small neighborhood of final state. In other words approximate controllability gives the possibility of steering the system to states which form the dense subspace in the state space. In recent years, controllability problems for various types of nonlinear dynamical systems in infinite dimensional spaces by using different kinds of approaches have been considered in many publications, see \cite{na1, ga1, ga2, jpd1, tg1, hrh4, nim1, nim2, nim4, nim5, nim7, nim8, rs6, rs1, rs2, vv5, vv6, zy1, zy2} and the references therein.

Recently, in \cite{nim8}, Mahmudov et al. studied the approximate controllability of second-order neutral stochastic evolution equations using semi-group methods, together with the Banach fixed point theorem. In \cite{rs2} Sakthivel et al. studied the approximate controllability of second-order systems with state-dependent delay by using Schauder fixed point theorem. In \cite{hrh1}, Henr\'{i}quez studied the existence of solutions of non-autonomous second order functional differential equations with infinite delay by using Leray-Schauder's Alternative fixed point theorem. In \cite{zy1}, Yan studied the approximate controllability of fractional neutral integro-differential inclusions with state-dependent delay in Hilbert spaces by using Dhage's fixed point theorem. In \cite{vv6}, Vijayakumar et al. discussed the approximate controllability for a class of fractional neutral integro-differential inclusions with state-dependent delay by using Dhage's fixed point theorem. In \cite{rs6} Sakthivel et al. studied the approximate controllability of fractional nonlinear differential inclusions with initial and nonlocal conditions by using Bohnenblust-Karlin's fixed point theorem. Very recently, in \cite{ga1} Arthi et al. established the sufficient conditiond for controllability of second-order impulsive evolution systems with infinite delay by using Leray-Schauder's fixed point theorem and in \cite{ga2}, proved existence and controllability results for second-order impulsive stochastic evolution systems with state-dependent delay by using Leray-Schauder's fixed point theorem. In \cite{tg1}, Guendouzi investigated the approximate controllability for a class of fractional neutral stochastic functional integro-differential inclusions Bohnenblust-Karlin's fixed point theorem.

Inspired by the above works, in this paper, we establish sufficient conditions for the approximate controllability for a class of second-order evolution differential inclusions in Hilbert spaces of the form
\begin{align}
x''(t)\in&A(t)x(t)+F(t,x(t))+Bu(t), \quad t\in I=[0,b],\label{vi1}\\
x(0)=&x_0\in X,\quad x'(0)=y_0\in X.\label{vi2}
\end{align}
In this equation, $A(t):D(A(t))\subseteq X\ra X$ is a closed linear operator on a Hilbert space $X$ and the control function $u(\cdot)\in L^2(I,U)$, a Hilbert space of admissible control functions. Further, $B$ is a bounded linear operator from $U$ to $X$, and $F:I\times X\ra 2^X\setminus\{\emptyset\}$ is a nonempty, bounded, closed and convex multivalued map.

Converting a second-order system into a first-order system and studying its controllability may not yield desired results due to the behavior of the semigroup generated by the linear part of the converted first order system. So, in many cases, it is advantageous to treat the second-order abstract differential system directly rather than to convert that into first-order system. To the best of our knowledge, the study of the approximate controllability for a class of second-order evolution differential systems in Hilbert spaces treated in this paper, is an untreated topic in the literature, to fill the gap, we study this interesting paper.

This paper is organized as follows. In Section \ref{s3}, we establish a set of sufficient conditions for the approximate controllability for a class of second-order evolution differential inclusions in Hilbert spaces. In Section \ref{s4}, we establish a set of sufficient conditions for the approximate controllability for a class of second-order evolution differential inclusions with nonlocal conditions in Hilbert spaces. In Section \ref{s5}, we establish a set of sufficient conditions for the approximate controllability for a class of second-order impulsive evolution differential inclusions with nonlocal conditions in Hilbert spaces. An example is presented in Section \ref{s6} to illustrate the theory of the obtained results.

\section{Preliminaries}\label{s2}
\noindent

In this section, we mention a few results, notations and lemmas needed to establish our main results. We introduce certain notations which will be used throughout the article without any further mention. Let $(X,\|\cdot\|_X)$ and $(Y,\|\cdot\|_Y)$ be Hilbert spaces, and $\mc{L}(Y,X)$ be the Banach space of bounded linear operators from $Y$ into $X$ equipped with its natural topology; in particular, we use the notation $\mc{L}(X)$ when $Y=X$. By $\rho(A)$, we denote the resolvent set of a linear operator $A$. Throughout this paper, $B_r(x,X)$ will denote the closed ball with center at $x$ and radius $r>0$ in a Hilbert space $X$. We denote by $\mc{C}$, the Banach space $C(J,X)$ endowed with supnorm given by $\|x\|_\mc{C}\equiv \sup_{t\in I}\|x(t)\|$, for $x\in \mc{C}$.

In recent times there has been an increasing interest in studying the abstract non-autonomous second order initial value problem
\begin{align}
x''(t)=&A(t)x(t)+f(t), \quad 0\le s, t \le b,\label{vp1}\\
x(s)=&x_0,\quad x'(s)=y_0,\label{vp2}
\end{align}
where $A(t):D(A(t))\subseteq X\ra X$, $t\in I=[0,b]$ is a closed densely defined operator and $f:I\ra X$ is an appropriate function. Equations of this type have been considered in many papers. The reader is referred to \cite{cjk1, ff1, yp1, yl1, eo1} and the references mentioned in these works. In the most of works, the existence of solutions to the problem $\eqref{vp1}-\eqref{vp2}$ is related to the existence of an evolution operator $S(t,s)$ for the homogeneous equation
\begin{align}
x''(t)=&A(t)x(t), \quad 0\le s, t \le b,\label{vp3}
\end{align}
Let as assume that the domain of $A(t)$ is a subspace $D$ dense in $X$ and independent of $t$, and for each $x\in D$ the function $t\mapsto A(t)x$ is continuous.

Following Kozak \cite{mk1}, in this work we will use the following concept of evolution operator.

\begin{definition}\label{vd1}
A family $S$ of bounded linear operators $S(t,s):I\times I\ra \mc{L}(X)$ is called an evolution operator for $\eqref{vp3}$ if the following conditions are satisfied:
\begin{enumerate}
\item[$(Z1)$] For each $x\in X$, the mapping $[0,b]\times[0,b]\ni(t,s)\ra S(t,s)x\in X$ is of class $C^1$ and
\begin{itemize}
\item[$(i)$] for each $t\in [0,b]$, $S(t,t)=0$,
\item[$(ii)$] for all $t,s\in [0,b]$, and for each $x\in X$,
$$\frac{\partial}{\partial t}S(t,s)x\Big|_{t=s}=x,\qquad \frac{\partial}{\partial t}S(t,s)x\Big|_{t=s}=-x.$$
\end{itemize}
\item[$(Z2)$] For all $t,s\in [0,b]$, if $x\in D(A)$, then $S(t,s)x\in D(A)$, the mapping $[0,b]\times[0,b]\ni(t,s)\ra S(t,s)x\in X$ is of class $C^2$ and
\begin{itemize}
\item[$(i)$] $\frac{\partial^2}{\partial t^2} S(t,x)x=A(t)S(t,s)x$,
\item[$(ii)$] $\frac{\partial^2}{\partial s^2} S(t,x)x=S(t,s)A(s)x$,
\item[$(iii)$] $\frac{\partial}{\partial s}\frac{\partial}{\partial t} S(t,x)x\Big|_{t=s}=0$.
\end{itemize}
\item[$(Z3)$] For all $t,s\in [0,b]$, if $x\in D(A)$, then $\frac{\partial}{\partial s}S(t,s)x\in D(A)$, then $\frac{\partial^2}{\partial t^2}\frac{\partial}{\partial s}S(t,s)x$, $\frac{\partial^2}{\partial s^2}\frac{\partial}{\partial t}S(t,s)x$ and
\begin{itemize}
\item[$(i)$] $\frac{\partial^2}{\partial t^2}\frac{\partial}{\partial s}S(t,s)x=A(t)\frac{\partial}{\partial s}S(t,s)x$,
\item[$(ii)$] $\frac{\partial^2}{\partial s^2}\frac{\partial}{\partial t}S(t,s)x=\frac{\partial}{\partial t}S(t,s)A(s)x$,
\end{itemize}
and the mapping $[0,b]\times[0,b]\ni(t,s)\ra A(t)\frac{\partial}{\partial s}S(t,s)x$ is continuous.
\end{enumerate}
\end{definition}

Throughout this work we assume that there exists an evolution operator $S(t,s)$ associated to the operator $A(t)$. To abbreviate the text, we introduce the operator $C(t,s)=-\frac{\partial S(t,s)}{\partial s}$. In addition, we set $N$ and $\wt{N}$ for positive constants such that $\sup_{0\le t,s \le b}\|S(t,s)\|\le N$ and $\sup_{0\le t,s \le b}\|C(t,s)\|\le \wt{N}$. Furthermore, we denote by $N_1$ a positive constant such that $$\|S(t+h,s)-S(t,s)\|\le N_1|h|,$$
for all $s,t, t+h\in [0,b]$. Assuming that $f:I\ra X$ is an integrable function, the mild solution $x:[0,b]\ra X$ of the problem $\eqref{vp1}-\eqref{vp2}$ is given by
\begin{gather*}
x(t)=C(t,s)x_0+S(t,s)y_0+\int_0^t S(t,\tau)h(\tau)d\tau.
\end{gather*}

In the literature several techniques have been discussed to establish the existence of the evolution operator $S(\cdot,\cdot)$. In particular, a very studied situation is that $A(t)$ is the perturbation of an operator $A$ that generates a cosine operator function. For this reason, below we briefly review some essential properties of the theory of cosine functions. Let $A:D(A)\subseteq X\ra X$ be the infinitesimal generator of a strongly continuous cosine family of bounded linear operators $(C_{0}(t))_{t\in \mb{R}}$ on Hilbert space $X$. We denote by $(S_0(t))_{t\in \mb{R}}$ the sine function associated with $(C_{0}(t))_{t\in \mb{R}}$ which is defined by
\begin{gather*}
S_0(t)x=\int_0^t C(s)xds, \quad x\in X, \quad t\in \mb{R}.
\end{gather*}
We refer the reader to \cite{hof1, cct1, cct2} for the necessary concepts about cosine functions. Next we only mention a few results and notations about this matter needed to establish our results. It is immediate that
\begin{gather*}
C_{0}(t)x-x=A\int_0^t S(s)xds,
\end{gather*}
for all $X$. The notation $[D(A)]$ stands for the domain of the operator $A$ endowed with the graph norm $\|x\|_A = \|x\| + \|Ax\|$, $x \in D(A)$. Moreover, in this paper the notation $E$ stands for the space formed by the vectors $x\in X$ for which the function $C(\cdot)x$ is a class $C^1$ on $\mb{R}$. It was proved by Kisy\'{n}ski \cite{jk1} that the space $E$ endowed with the norm
\begin{gather*}
\|x\|_E=\|x\|+\sup_{0\le t\le 1}\|AS(t,0)x\|, \quad x\in E,
\end{gather*}
is a Hilbert space. The operator valued function
\begin{gather*}
G(t)=\left[\begin{array}{lr} C_{0}(t)& S_0(t)\\ AS_0(t) & C_0(t) \end{array}\right]
\end{gather*}
is a strongly continuous group of linear operators on the space $E\times X$ generated by the operator $ \mc{A}=\left[\begin{array}{lr} 0& I\\ A & 0 \end{array}\right] $ defined on $D(A)\times E$. It follows from this that $AS_0(t):E\ra X$ is a bounded linear operator such that $AS_0(t)x\ra 0$ as $t\ra 0$, for each $x\in E$. Furthermore, if $x:[0,\infty)\ra X$ is a locally integrable function, then $z(t)=\int_0^t S(t,s)x(s)ds$ defines an $E$-valued continuous function.

The existence of solutions for the second order abstract Cauchy problem
\begin{align}
x''(t)=&Ax(t)+h(t), \quad 0\le t\le b,\label{vp4}\\
x(0)=&x_0, \quad x'(0)=y_0,\label{vp5}
\end{align}
where $h:[0,b]\ra X$ is an integrable function, has been discussed in \cite{cct1}. Similarly, the existence of solutions of the semilinear second order Cauchy problem it has been treated in \cite{cct2}. We only mention here that the function $x(\cdot)$ given by
\begin{gather}
x(t)=C_0(t-s)x_0+S_0(t-s)y_0+\int_s^t S_0(t-\tau)h(\tau)d\tau, \quad 0\le t \le b, \label{vp6}
\end{gather}
is called the mild solution of $\eqref{vp4}-\eqref{vp5}$ and that when $x_0\in E$, $x(\cdot)$ is continuously differentiable and
\begin{gather*}
x'(t)=AS_0(t-s)x_0+C_0(t-s)y_0+\int_s^t C_0(t-\tau)h(\tau)d\tau, \quad 0\le t \le b.
\end{gather*}
In addition, if $x_0\in D(A)$, $y_0\in E$ and $f$ is a continuously differentiable function, then the function $x(\cdot)$ is a solution of the initial value problem $\eqref{vp4}-\eqref{vp5}$.

Assume now that $A(t)=A+\wt{B}(t)$ where $\wt{B}(\cdot):\mb{R}\ra \mc{L}(E,X)$ is a map such that the function $t\ra \wt{B}(t)x$ is continuously differentiable in $X$ for each $x\in E$. It has been established by Serizawa \cite{hs1} that for each $(x_0,y_0)\in D(A)\times E$ the nonautonomous abstract Cauchy problem
\begin{align}
x''(t)=&(A+\wt{B}(t))x(t), \quad t\in \mb{R},\label{vp7}\\
x(0)=&x_0, \quad x'(0)=y_0,\label{vp8}
\end{align}
has a unique solution $x(\cdot)$ such that the function $t\mapsto x(t)$ is continuously differentiable in $E$. It is clear that the same argument allows us to conclude that equation \eqref{vp7} with the initial condition \eqref{vp5} has a unique solution $x(\cdot,s)$ such that the function $t\mapsto x(t,s)$ is continuously differentiable in $E$. It follows from \eqref{vp6} that
\begin{gather*}
x(t,s)=C_0(t-s)x_0+S_0(t-s)y_0+\int_s^t S(t-\tau)\wt{B}(\tau)x(\tau,s)d\tau.
\end{gather*}
In particular, for $x_0=0$ we have
\begin{gather*}
x(t,s)=S_0(t-s)y_0+\int_s^t S(t-\tau)\wt{B}(\tau)x(\tau,s)d\tau.
\end{gather*}
Consequently,
\begin{gather*}
\|x(t,s)\|_1\le \|S_0(t-s)\|_{\mc{L}(X,E)}\|y_0\|+\int_s^t \|S_0(t-s)\|_{\mc{L}(X,E)}\|\wt{B}(\tau)\|_{\mc{L}(X,E)}\|x(s,\tau)\|_1d\tau.
\end{gather*}
and, applying the Gronwall-Bellman lemma we infer that
\begin{gather*}
\|x(t,s)\|_1\le \wt{M}\|y_0\|,\quad s, t\in I.
\end{gather*}
We define the operator $S(t,s)y_0=x(t,s)$. It follows from the previous estimate that $S(t,s)$ is a bounded linear map on $E$. Since $E$ is dense in $X$, we can extend $S(t,s)$ to $X$. We keep the notation $S(t,s)$ for this extension. It is well known that, except in the case $\dim(X)<\infty$, the cosine function $C_0(t)$ cannot be compact for all $t\in \mb{R}$. By contrast, for the cosine functions that arise in specific applications, the sine function $S_0(t)$ is very often a compact operator for all $t\in \mb{R}$.

\begin{theorem}\cite[Theorem 1.2]{hrh1}.
Under the preceding conditions, $S(\cdot,\cdot)$ is an evolution operator for $\eqref{vp7}-\eqref{vp8}$. Moreover, if $S_0(t)$ is compact for all $t\in \mb{R}$, then $S(t,s)$ is also compact for all $s\le t$.
\end{theorem}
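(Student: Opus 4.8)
The plan is to establish the two assertions separately: first that the operator $S(t,s)$ constructed above satisfies Kozak's axioms $(Z1)$--$(Z3)$ of Definition \ref{vd1}, and then that compactness of the sine function $S_0$ propagates to $S(t,s)$.

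For the first assertion I would start from the Volterra integral equation defining $S(t,s)y_0=x(t,s)$, namely
$$x(t,s)=S_0(t-s)y_0+\int_s^t S_0(t-\tau)\wt{B}(\tau)x(\tau,s)\,d\tau,$$
whose unique solution exists by successive approximation, with the $E$-valued bound already supplied by the Gronwall--Bellman estimate. The differentiability hypothesis on $\wt{B}$ together with the smoothing identities for the cosine/sine families ($S_0(0)=0$, $S_0'(t)=C_0(t)$, $C_0(0)=I$, and $C_0'(t)x=AS_0(t)x$ for $x\in E$) yields joint $C^1$ regularity of $(t,s)\mapsto x(t,s)$, and $C^2$ regularity when $y_0\in D(A)$. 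I would then verify the axioms by direct differentiation: $S(t,t)=0$ is immediate from $x(s,s)=0$; differentiating in $t$ gives $\partial_t x(t,s)=C_0(t-s)y_0+\int_s^t C_0(t-\tau)\wt{B}(\tau)x(\tau,s)\,d\tau$, which at $t=s$ equals $C_0(0)y_0=y_0$, establishing $(Z1)(ii)$; a second $t$-differentiation, using $C_0'(t-\tau)x=AS_0(t-\tau)x$ and the closedness of $A$ to pull $A$ out of the integral, collapses the expression to $A\,x(t,s)+\wt{B}(t)x(t,s)=A(t)S(t,s)y_0$, which is $(Z2)(i)$.

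For the remaining axioms $(Z2)(ii)$, $(Z2)(iii)$ and $(Z3)$ I would differentiate the integral equation in $s$. Here $s$ enters in three places --- the term $S_0(t-s)y_0$, the lower limit, and the argument of $x(\tau,s)$ --- but the boundary contribution from Leibniz's rule, $-S_0(t-s)\wt{B}(s)x(s,s)$, vanishes because $x(s,s)=0$, leaving a Volterra equation for $\partial_s x(t,s)$ with forcing $-C_0(t-s)y_0$. Iterating this analysis and invoking the identities for $C_0,S_0$ on $D(A)$ and $E$ yields the backward equations in $s$, the membership $\partial_s S(t,s)x\in D(A)$ for $x\in D(A)$, and the continuity of $(t,s)\mapsto A(t)\partial_s S(t,s)x$. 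This step is the most technical, though essentially mechanical once the regularity from the first step is in hand.

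For the compactness assertion I would work directly from
$$S(t,s)=S_0(t-s)+\int_s^t S_0(t-\tau)\wt{B}(\tau)S(\tau,s)\,d\tau.$$
The leading term $S_0(t-s)$ is compact by hypothesis. For the integral term I would first record the analogue for cosine families of the norm-continuity of compact semigroups: if $S_0(\cdot)$ is compact, then $\tau\mapsto S_0(t-\tau)$ is continuous in the operator norm on $[s,t]$. Granting this, the integrand $\tau\mapsto S_0(t-\tau)\wt{B}(\tau)S(\tau,s)$ is a norm-continuous family of compact operators (the factor $S_0(t-\tau)$ being compact and the remaining factors bounded on the relevant spaces), so its Riemann sums are compact and converge to the integral in operator norm; since the compact operators form a closed subspace of $\mathcal{L}(X)$, the integral term is compact, and hence so is $S(t,s)$. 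The main obstacle is precisely this compactness of the integral term: the passage from strong continuity to operator-norm continuity of the sine function under the compactness hypothesis is the delicate point, since it is what guarantees that the approximating Riemann sums converge uniformly rather than merely strongly, which is indispensable for concluding that the limit is compact.
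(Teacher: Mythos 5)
The paper itself supplies no proof of this theorem; it is imported verbatim from Henr\'{i}quez \cite[Theorem 1.2]{hrh1}, so your proposal can only be judged against the known argument there. Your architecture --- verify Kozak's axioms $(Z1)$--$(Z3)$ by differentiating the Volterra equation, then derive compactness from the perturbation formula $S(t,s)=S_0(t-s)+\int_s^t S_0(t-\tau)\wt{B}(\tau)S(\tau,s)\,d\tau$ --- is indeed the standard one. The first part of your sketch is essentially correct, with one routine omission: your differentiation gives regularity for $y_0\in E$ (resp.\ $D(A)$), whereas $(Z1)$ demands joint $C^1$ regularity for \emph{every} $x\in X$; this must be recovered by extending the derivative formulas from $E$ to $X$ via density and the uniform Gronwall bounds.

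The genuine gap is in the compactness step: you assert that $\tau\mapsto S_0(t-\tau)\wt{B}(\tau)S(\tau,s)$ is a \emph{norm}-continuous family. That does not follow from the hypotheses and is false in general: $\wt{B}(\cdot)$ is only strongly ($C^1$-) continuous, so $\tau\mapsto \wt{B}(\tau)S(\tau,s)$ is merely strongly continuous, and a compact factor on the \emph{left} does not upgrade strong continuity to norm continuity --- the upgrade $T_nK\to TK$ in operator norm works when the compact operator stands to the right of the strongly convergent family (uniform convergence on the precompact set $K(B_1)$), which is the opposite of your configuration. You also mislocate the delicate point: operator-norm continuity of the sine family requires no compactness hypothesis at all, since $S_0(t)x-S_0(r)x=\int_r^t C_0(\sigma)x\,d\sigma$ and $C_0$ is locally uniformly bounded, so $S_0$ is norm-Lipschitz --- the paper encodes precisely this in the constant $N_1$ with $\|S(t+h,s)-S(t,s)\|\le N_1|h|$. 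The repair is to restructure your Riemann-sum approximation so that only the sine factor is frozen: partition $[s,t]$ and approximate the integral by
\begin{align*}
\sum_i S_0(t-\tau_i)\int_{\tau_i}^{\tau_{i+1}}\wt{B}(\tau)S(\tau,s)x\,d\tau.
\end{align*}
The error is $O(\delta)$ uniformly in $\|x\|\le 1$ by the Lipschitz bound on $S_0$ and the uniform bound $\sup_\tau\|\wt{B}(\tau)S(\tau,s)\|_{\mc{L}(X)}<\infty$ (which holds because $S(\tau,s):X\to E$ is uniformly bounded and $\wt{B}(\tau)\in\mc{L}(E,X)$ is locally uniformly bounded); each summand is compact, being a compact operator composed with a bounded one, and since the compact operators are norm-closed in $\mc{L}(X)$, the integral term, and hence $S(t,s)$, is compact. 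With that restructuring --- and with compactness of $S_0$ entering only through the frozen factors and the leading term, not through any continuity claim --- your argument goes through.
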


We also introduce some basic definitions and results of multivalued maps. For more details on multivalued maps, see the books of Deimling \cite{kde1} and Hu and Papageorgious \cite{sh1}.

A multivalued map $G: X\ra 2^X\setminus \{\emptyset\}$ is convex (closed) valued if $G(x)$ is convex (closed) for all $x\in X$. $G$ is bounded on bounded sets if $G(C)=\bigcup_{x\in C} G(x)$ is bounded in $X$ for any bounded set $C$ of $X$, i.e., $\sup_{x\in C}\Big\{\sup\{\|y\|:y\in G(x)\}\Big\}<\infty$.

\begin{definition}
$G$ is called upper semicontinuous (u.s.c. for short) on $X$ if for each $x_0\in X$, the set $G(x_0)$ is a nonempty closed subset of $X$, and if for each open set $C$ of $X$ containing $G(x_0)$, there exists an open neighborhood $V$ of $x_0$ such that $G(V)\subseteq C$.
\end{definition}

\begin{definition}
$G$ is called completely continuous if $G(C)$ is relatively compact for every bounded subset $C$ of $X$.
\end{definition}

If the multivalued map $G$ is completely continuous with nonempty values, then $G$ is u.s.c., if and only if $G$ has a closed graph, i.e., $x_n\ra x_*$, $y_n\ra y_*$, $y_n\in Gx_n$ imply $y_*\in Gx_*$. $G$ has a fixed point if there is a $x\in X$ such that $x\in G(x)$.

\begin{definition}
A function $x\in \mc{C}$ is said to be a mild solution of system \eqref{vi1}-\eqref{vi2} if $x(0)=x_0$, $x'(0)=y_0$ and there exists $f\in L^1(I,X)$ such that $f(t)\in F(t,x(t))$ on $t\in I$ and the integral equation
\begin{align*}
x(t)=C(t,0)x_0+S(t,0)y_0+\int_0^tS(t,s)f(s)ds+\int_0^tS(t,s)Bu(s)ds,\  t\in I.
\end{align*}
is satisfied.
\end{definition}

In order to address the problem, it is convenient at this point to introduce two relevant operators and basic assumptions on these operators:

\begin{align*}
\Upsilon_0^b&=\int_0^bS(b,s)BB^*S^*(b,s)ds: X\ra X,\\
R(a,\Upsilon_0^b)&=(a I+\Upsilon_0^b)^{-1}: X\ra X.
\end{align*}
It is straightforward that the operator $\Upsilon_0^b$ is a linear bounded operator.

To investigate the approximate controllability of system \eqref{vi1}-\eqref{vi2}, we impose the following condition:
\begin{enumerate}
\item [$\bf H_0$] $aR(a,\Upsilon_0^b)\ra 0$ as $a\ra 0^+$ in the strong operator topology.
\end{enumerate}

In view of \cite{nim1}, Hypothesis $\bf{H_0}$ holds if and only if the linear system
\begin{align}
x''(t)=&Ax(t)+(Bu)(t),\quad t\in [0,b],  \label{s2e1} \\
x(0)=&x_0 \quad x'(0)=y_0,\label{s2e2}
\end{align}
is approximately controllable on $[0,b]$.

Some of our results are proved using the next well-known results.

\begin{lemma}\cite[Lasota and Opial]{al1}
Let $I$ be a compact real interval, $BCC(X)$ be the set of all nonempty, bounded, closed and convex subset of $X$ and $F$ be a multivalued map satisfying $F:I\times X\ra BCC(X)$ is measurable to $t$ for each fixed $x\in X$, u.s.c. to $x$ for each $t\in I$, and for each $x\in \mc{C}$ the set
\begin{align*}
S_{F,x}=\{f\in L^1(I,X):f(t)\in F(t,x(t)),\  t\in I\}
\end{align*}
is nonempty. Let $\ms{F}$ be a linear continuous from $L^1(I,X)$ to $\mc{C}$, then the operator
\begin{gather*}
\ms{F} \circ S_F:\mc{C}\ra BCC(\mc{C}),\ x\ra (\ms{F} \circ S_F)(x)=\ms{F}(S_{F,x}),
\end{gather*}
is a closed graph operator in $\mc{C}\times \mc{C}$.
\end{lemma}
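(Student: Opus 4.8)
The statement is the classical Lasota--Opial closed-graph lemma, so the plan is to verify directly that $\Gamma:=\ms{F}\circ S_F$ has closed graph. Concretely, I would take sequences $x_n\ra x_*$ and $y_n\ra y_*$ in $\mc{C}$ with $y_n\in\Gamma(x_n)$ and aim to produce $f_*\in S_{F,x_*}$ with $\ms{F}(f_*)=y_*$, which gives $y_*\in\Gamma(x_*)$. By the definition of $\Gamma$, for each $n$ there is a selection $f_n\in S_{F,x_n}$, i.e.\ $f_n\in L^1(I,X)$ with $f_n(t)\in F(t,x_n(t))$ a.e.\ on $I$, such that $y_n=\ms{F}(f_n)$; these selections are the objects I would work with.

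The first real step is to extract a weak limit of $\{f_n\}$ in $L^1(I,X)$. Since $x_n\ra x_*$ uniformly, the family $\{x_n(t):t\in I,\ n\in\N\}$ is bounded, and because $F$ is bounded on bounded sets there is an integrable majorant with $\|f_n(t)\|\le m(t)$ a.e.; as $X$ is a Hilbert space the values $F(t,x_n(t))$ are bounded, closed and convex, hence weakly compact, so the family $\{f_n\}$ is uniformly integrable and pointwise relatively weakly compact. By the Dunford--Pettis theorem I may pass to a subsequence with $f_n\rightharpoonup f_*$ weakly in $L^1(I,X)$.

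Next I would convert weak convergence into something usable on the values of $F$. By Mazur's lemma there are convex combinations $\tilde f_m=\sum_k\lambda_k^m f_{n_k}$ (finite, $\lambda_k^m\ge0$, $\sum_k\lambda_k^m=1$) converging to $f_*$ strongly in $L^1$, and along a further subsequence $\tilde f_m(t)\ra f_*(t)$ for a.e.\ $t$. For such $t$ one has $\tilde f_m(t)\in\ov{\mathrm{conv}}\bigcup_{k\ge m}F(t,x_{n_k}(t))$. Using $x_{n_k}(t)\ra x_*(t)$ together with the upper semicontinuity of $F(t,\cdot)$ and the fact that its values are closed and convex, the sets on the right contract onto $F(t,x_*(t))$ as $m\ra\infty$, which forces $f_*(t)\in F(t,x_*(t))$ a.e. Hence $f_*\in S_{F,x_*}$. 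This passage --- transporting the upper semicontinuity of $F$ through the Mazur convex combinations to pin down the pointwise inclusion for the limit selection --- is the technical heart of the argument and the step I expect to be the main obstacle.

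Finally I would identify the limit. Because $\ms{F}$ is linear and continuous from $L^1(I,X)$ to $\mc{C}$, we have $\ms{F}(\tilde f_m)\ra\ms{F}(f_*)$; on the other hand $\ms{F}(\tilde f_m)=\sum_k\lambda_k^m\,\ms{F}(f_{n_k})=\sum_k\lambda_k^m\,y_{n_k}$ is a convex combination of terms all converging to $y_*$, so $\ms{F}(\tilde f_m)\ra y_*$ as well. Comparing the two limits gives $y_*=\ms{F}(f_*)$ with $f_*\in S_{F,x_*}$, that is $y_*\in\ms{F}(S_{F,x_*})=\Gamma(x_*)$, which proves that $\Gamma$ is a closed graph operator in $\mc{C}\times\mc{C}$.
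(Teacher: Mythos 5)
The paper offers no proof of this lemma at all: it is imported verbatim from Lasota and Opial \cite{al1} and used as a black box in Step 5 of Theorem \ref{34}, so there is no internal argument to compare against. Your proposal supplies the standard proof from that literature, and it is essentially correct, but two points deserve care. First, the statement as printed contains no integrable-bound hypothesis, and without one your weak-compactness step has nothing to work with: nonemptiness of $S_{F,x}$ alone does not make $\{f_n\}$ uniformly integrable. Your appeal to ``$F$ bounded on bounded sets'' is not among the lemma's stated hypotheses and, even granted, yields a bound for each fixed $t$ rather than an $L^1$ majorant; what actually rescues the step is the $L^1$-Carath\'eodory condition assumed in the original Lasota--Opial lemma and, in this paper's applications, hypotheses $\bf H_2$--$\bf H_3$, which give $\|f_n(t)\|\le L_{f,r}(t)$ once one notes that the convergent sequence $x_n$ lies in a common ball $\mf{B}_r$. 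You should state this as an explicit hypothesis rather than derive it. Second, in $L^1(I,X)$ with $X$ infinite-dimensional the scalar Dunford--Pettis theorem does not suffice; one needs the vector-valued criterion (uniform integrability together with a.e.\ relative weak compactness of the values $\{f_n(t)\}$), which your observation that bounded closed convex sets in the Hilbert space $X$ are weakly compact does supply, so the extraction of $f_n\rightharpoonup f_*$ is legitimate in this setting. Granting these, the Mazur passage to a.e.\ convergent convex combinations, the $\ve$-neighborhood form of upper semicontinuity on closed convex values forcing $f_*(t)\in F(t,x_*(t))$, and the identification $y_*=\ms{F}(f_*)$ are all sound. A small simplification worth noting: since $\ms{F}$ is linear and norm-continuous it is weak-to-weak continuous, so $y_n=\ms{F}(f_n)\rightharpoonup\ms{F}(f_*)$ already identifies $y_*=\ms{F}(f_*)$; Mazur's lemma is then needed only for the pointwise inclusion, not for the final limit comparison.
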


\begin{lemma} \cite[Bohnenblust and Karlin]{hfb1}. \label{29}
Let $\mc{D}$ be a nonempty subset of $X$, which is bounded, closed, and convex. Suppose $G:\mc{D}\ra 2^X\setminus \{\emptyset\}$ is u.s.c. with closed, convex values, and such that $G(\mc{D})\subseteq \mc{D}$ and $G(\mc{D})$ is compact. Then $G$ has a fixed point.
\end{lemma}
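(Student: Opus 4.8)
The statement to be proved is the classical Bohnenblust--Karlin fixed point theorem, and the plan is to derive it from the finite-dimensional Kakutani fixed point theorem by means of Schauder-type finite-dimensional approximations of $G$, followed by a compactness and closed-graph limiting argument.

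First I would record a reduction. Since $\mathcal{D}$ is closed and $G(\mathcal{D})\subseteq\mathcal{D}$, the set $K=\overline{G(\mathcal{D})}$ is a compact subset of $\mathcal{D}$. Because $\mathcal{D}$ is bounded and $G(\mathcal{D})$ is compact, $G$ is completely continuous on $\mathcal{D}$; hence, by the characterization recalled just above in the preliminaries (a completely continuous multimap with nonempty values is u.s.c. if and only if it has closed graph), the map $G$ has a closed graph. This closed-graph property is precisely what I will exploit at the end of the argument.

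Next comes the construction of finite-dimensional approximations, which I expect to be the technical heart of the proof. For each $n\in\mathbb{N}$, cover the compact set $K$ by finitely many open balls $B_{1/n}(y_i^{(n)},X)$, $i=1,\dots,m_n$, with centers $y_i^{(n)}\in K\subseteq\mathcal{D}$, and let $\{\lambda_i^{(n)}\}$ be a continuous partition of unity subordinate to this cover. Define the Schauder projection
$$P_n(y)=\sum_{i=1}^{m_n}\lambda_i^{(n)}(y)\,y_i^{(n)}, \qquad y\in K,$$
which is continuous, takes values in the convex hull of the centers, and satisfies $\|P_n(y)-y\|\le 1/n$ for all $y\in K$ (only centers within distance $1/n$ of $y$ contribute). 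Set $X_n=\mathrm{span}\{y_1^{(n)},\dots,y_{m_n}^{(n)}\}$ and $\mathcal{D}_n=\mathrm{co}\{y_1^{(n)},\dots,y_{m_n}^{(n)}\}$, a compact convex subset of the finite-dimensional space $X_n$ contained in $\mathcal{D}$ by convexity of $\mathcal{D}$. On $\mathcal{D}_n$ I would define the multivalued map $G_n(x)=\mathrm{co}\big(P_n(G(x))\big)$. Since $G$ is u.s.c. with nonempty values, $P_n$ is continuous, and taking closed convex hulls of compact sets in finite dimensions preserves upper semicontinuity, the map $G_n\colon\mathcal{D}_n\to 2^{\mathcal{D}_n}$ is u.s.c. with nonempty, compact, convex values, and $G_n(\mathcal{D}_n)\subseteq\mathcal{D}_n$.

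Applying Kakutani's fixed point theorem to $G_n$ on the compact convex set $\mathcal{D}_n$ yields a point $x_n\in\mathcal{D}_n$ with $x_n\in G_n(x_n)$. Writing $x_n=\sum_j\mu_j P_n(z_j)$ as a finite convex combination with $z_j\in G(x_n)$, and setting $w_n=\sum_j\mu_j z_j$, the convexity of the value $G(x_n)$ gives $w_n\in G(x_n)$, while the approximation bound yields $\|x_n-w_n\|\le 1/n$. Finally, since $w_n\in G(\mathcal{D})\subseteq K$ with $K$ compact, a subsequence satisfies $w_{n_k}\to x^{*}\in K$; then $x_{n_k}\to x^{*}$ as well, because $\|x_{n_k}-w_{n_k}\|\to 0$. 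Passing to the limit in the relations $w_{n_k}\in G(x_{n_k})$ and invoking the closed graph of $G$ established in the reduction step, I conclude that $x^{*}\in G(x^{*})$, so $x^{*}$ is the desired fixed point. The delicate points on which I would spend the most care are the verification of upper semicontinuity for the finite-dimensional surrogate $G_n$ and the justification that the Schauder projection maps $K$ into $\mathcal{D}_n$ with the stated error estimate.
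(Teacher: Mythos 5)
The paper itself offers no proof of this lemma---it is quoted verbatim from the original Bohnenblust--Karlin paper---so the only fair comparison is with the classical argument, and yours is exactly that: finite-dimensional Kakutani plus Schauder projections, with a closed-graph passage to the limit. Your proof is correct: the key steps all check out, namely the estimate $\|P_n(y)-y\|\le 1/n$ (only centers within $1/n$ of $y$ receive positive weight), the fact that $G_n(x)=\mathrm{co}\bigl(P_n(G(x))\bigr)$ has compact convex values in $\mathcal{D}_n$ (each $G(x)$ is a closed subset of the compact set $G(\mathcal{D})$, hence compact, and convex hulls of compact sets are compact in finite dimensions by Carath\'eodory), the use of convexity of $G(x_n)$ to conclude $w_n=\sum_j\mu_j z_j\in G(x_n)$ with $\|x_n-w_n\|\le 1/n$, and the final limit $x^{*}\in G(x^{*})$ via the closed graph, which you correctly justified from the complete continuity of $G$ together with the u.s.c.--closed-graph equivalence recalled in the paper's preliminaries. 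No gaps.
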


\section{Approximate controllability results}\label{s3}
\noindent

In this section, first we establish a set of sufficient conditions for the approximate controllability for a class of second order evolution differential inclusions of the form \eqref{vi1}-\eqref{vi2} in Hilbert spaces by using Bohnenblust-Karlin's fixed point theorem. In order to establish the result, we need the following hypotheses:

\begin{enumerate}
\item [$\bf H_1$] $S_0(t)$, $t>0$ is compact.

\item[$\bf H_2$] For each positive number $r$ and $x\in \mc{C}$ with $\|x\|_\mc{C}\le r$, there exists $L_{f,r}(\cdot)\in L^1(I,\mb{R}^+)$ such
that
\begin{gather*}
\sup\{\|f\|:f(t)\in F(t,x(t))\}\le L_{f,r}(t),
\end{gather*}
for a.e. $t\in I$.

\item[$\bf H_3$] The function $s\ra  L_{f,r}(s)\in L^1([0,t],\mb{R}^+)$ and there exists a $\gamma>0$ such that
\begin{gather*}
\lim_{r\ra \infty} \frac{\int_0^t L_{f,r}(s)ds}{r}=\gamma <+\infty.
\end{gather*}
\end{enumerate}

It will be shown that the system \eqref{vi1}-\eqref{vi2} is approximately controllable, if for all $a>0$, there exists a continuous function $x(\cdot)$ such that
\begin{align}
x(t)=&C(t,0)x_0+S(t,0)y_0+\int_0^t S(t,s)f(s)ds+\int_0^t S(t,s)Bu(s,x)ds,\quad f\in S_{F,x},\label{c7m1}\\
u(t,x)&=B^*S(b,t)R(a,\Upsilon_0^b)p(x(\cdot)),\label{c7m2}
\end{align}
where $$ p(x(\cdot))=x_b-C(b,0)x_0-S(b,0)y_0-\int_0^t S(b,s)f(s)ds.$$

\begin{theorem}\label{34}
Suppose that the hypotheses $\bf{H_0}$-$\bf{H_3}$ are satisfied. Assume also
\begin{align}
\wt{N}\gamma\Big[1+\frac{1}{\alpha}\wt{N}^2M^2_{B}b\Big]<1,\label{31}
\end{align}
where $M_B=\|B\|$, Then system \eqref{vi1}-\eqref{vi2} has a solution on $I$.
\end{theorem}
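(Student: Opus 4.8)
The plan is to recast the solvability of the integral system \eqref{c7m1}--\eqref{c7m2} as a fixed-point problem for a multivalued operator and to apply the Bohnenblust--Karlin theorem (Lemma \ref{29}). I would define $\Phi:\mc{C}\ra 2^{\mc{C}}\setminus\{\emptyset\}$ by declaring that $z\in\Phi(x)$ precisely when
\begin{align*}
z(t)=C(t,0)x_0+S(t,0)y_0+\int_0^t S(t,s)f(s)\,ds+\int_0^t S(t,s)Bu(s,x)\,ds,\quad t\in I,
\end{align*}
for some $f\in S_{F,x}$, where $u(\cdot,x)$ is the control \eqref{c7m2} built from that same $f$. By $\mathbf{H_2}$ the selection set $S_{F,x}$ is nonempty, so $\Phi(x)\neq\emptyset$; since $F$ is convex valued and the maps $f\mapsto z$ are affine, $\Phi(x)$ is convex, and the closedness of $F$ gives closed values. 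A fixed point $x\in\Phi(x)$ is exactly a mild solution of \eqref{vi1}--\eqref{vi2}, so it remains to check the hypotheses of Lemma \ref{29}.

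First I would produce an invariant ball $B_r:=B_r(0,\mc{C})$. For $x\in B_r$ and $z\in\Phi(x)$, the bounds $\|S(t,s)\|\le N$, $\|C(t,s)\|\le\wt{N}$, $\|R(a,\Upsilon_0^b)\|\le 1/a$ and $\|B\|=M_B$, applied first to $\|p(x(\cdot))\|$ and then to $\|u(s,x)\|$, yield via $\mathbf{H_2}$ an estimate whose $r$-dependent part is controlled by $\wt{N}\big[1+\tfrac1a\wt{N}^2M_B^2 b\big]\int_0^b L_{f,r}(s)\,ds$, the remaining terms being independent of $r$. If no ball were invariant, then for each $r$ there would exist $x_r\in B_r$ and $z_r\in\Phi(x_r)$ with $\|z_r\|_{\mc{C}}>r$; dividing by $r$, letting $r\ra\infty$ and invoking the growth hypothesis $\mathbf{H_3}$ would force $1\le\wt{N}\gamma\big[1+\tfrac1a\wt{N}^2M_B^2 b\big]$, contradicting \eqref{31}. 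Hence $\Phi(B_r)\subseteq B_r$ for some $r>0$.

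Next I would show that $\Phi(B_r)$ is relatively compact by Arzel\`a--Ascoli. The terms $C(t,0)x_0$ and $S(t,0)y_0$ are continuous in $t$, and equicontinuity of the integral terms follows by splitting each integral at $t-\eps$, using the uniform continuity of $(t,s)\mapsto S(t,s)$ together with the constant $N_1$ that bounds $\|S(t+h,s)-S(t,s)\|$. Pointwise relative compactness of $\{z(t):z\in\Phi(B_r)\}$ for $t>0$ comes from $\mathbf{H_1}$: by the evolution-operator theorem recalled above, compactness of $S_0(t)$ makes $S(t,s)$ compact for $s<t$, so the truncated operators $f\mapsto\int_0^{t-\eps}S(t,s)f(s)\,ds$ are compact. \emph{I expect this compactness step to be the main obstacle}, because the smoothing of $S(t,s)$ degenerates as $s\ra t$ and at $t=0$; the $[t-\eps,t]$ portion of each integral must therefore be shown to be uniformly small, uniformly over $x\in B_r$ and over all selections $f\in S_{F,x}$, which is exactly where the $L^1$-domination $L_{f,r}$ from $\mathbf{H_2}$--$\mathbf{H_3}$ is used.

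Finally I would verify upper semicontinuity, which for a map with relatively compact range reduces to the closed-graph property. Collecting the entire $f$-dependence of $\Phi$ into the single linear operator $\ms{L}:L^1(I,X)\ra\mc{C}$ given by
\begin{align*}
(\ms{L}f)(t)=\int_0^t S(t,s)f(s)\,ds-\int_0^t S(t,s)BB^*S(b,s)R(a,\Upsilon_0^b)\Big(\int_0^b S(b,\tau)f(\tau)\,d\tau\Big)ds,
\end{align*}
which is continuous from $L^1(I,X)$ into $\mc{C}$, the remaining part of $z$ is independent of $x$. The Lasota--Opial lemma applied to $\ms{L}\circ S_F$ then shows that $x_n\ra x_*$, $z_n\in\Phi(x_n)$, $z_n\ra z_*$ imply $z_*\in\Phi(x_*)$, so $\Phi$ has closed graph and hence is u.s.c. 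With closed convex values, the invariance $\Phi(B_r)\subseteq B_r$, the relative compactness of $\Phi(B_r)$, and upper semicontinuity all in hand, Lemma \ref{29} furnishes a fixed point $x\in\Phi(x)$, i.e. a mild solution of \eqref{vi1}--\eqref{vi2} on $I$.
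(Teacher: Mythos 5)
Your proposal is correct and takes essentially the same route as the paper's own proof: the identical multivalued operator built from the control \eqref{c7m2}, the same invariant-ball contradiction argument using $\mathbf{H_3}$ against \eqref{31}, compactness via Arzel\`a--Ascoli with the $\varepsilon$-truncation resting on $\mathbf{H_1}$, and the closed graph obtained from the Lasota--Opial lemma applied to a continuous linear operator collecting the entire $f$-dependence, before invoking Bohnenblust--Karlin. Nothing of substance differs from the paper's argument.
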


\begin{proof}
The main aim in this section is to find conditions for solvability of system \eqref{vi1}-\eqref{vi2} for $a>0$. We show that, using the control $u(x,t)$, the operator ${\Gamma}:\mc{C}\ra 2^{\mc{C}}$, defined by
\begin{align*}
{\Gamma}(x)=&\Big\{\varphi\in \mc{C}:\varphi (t)=C(t,0)x_0+S(t,0)y_0+\int_0^t S(t,s)[f(s)+Bu(s,x)]ds,\ f\in S_{F,x}\Big\},
\end{align*}
has a fixed point $x$, which is a mild solution of system \eqref{vi1}-\eqref{vi2}.

We now show that $\Gamma$ satisfies all the conditions of Lemma \ref{29}. For the sake of convenience, we subdivide the proof into five steps.

\noindent{\bf Step 1.} $\Gamma$ is convex for each $x\in \mc{C}$.

In fact, if $\varphi_1$, $\varphi_2$ belong to $\Gamma(x)$, then there exist $f_1$, $f_2\in S_{F,x}$ such that for each $t\in I$, we have
\begin{align*}
\varphi_i(t)=&C(t,0)x_0+S(t,0)y_0+\int_0^t S(t,s)f_i(s)ds+\int_0^t S(t,s)BB^{*}S^{*}(b,t) R(a,\Upsilon_0^b)\Big[x_b-C(b,0)x_0\\&-S(b,0)y_0-\int_0^b  S(b,\eta)f_i(\eta)d\eta\Big](s) ds,\quad i=1,2.
\end{align*}
Let $\lambda\in [0,1]$. Then for each $t\in I$, we get
\begin{align*}
\lambda\varphi_1(t)+(1-\lambda)\varphi_2(t)=&C(t,0)x_0+S(t,0)y_0+\int_0^t S(t,s)[\lambda f_1(s) +(1-\lambda)f_2(s)]ds\\&+\int_0^t S(t,s)BB^{*}S^{*}(b,t) R(a,\Upsilon_0^b)\Bigg[x_b-C(b,0)x_0-S(b,0)y_0\\&-\int_0^b S(b,s)[\lambda f_1(s) +(1-\lambda)f_2(s)]ds\Bigg](s)ds.
\end{align*}
It is easy to see that $S_{F,x}$ is convex since $F$ has convex values. So, $\lambda f_1+(1-\lambda)f_2\in S_{F,x}$. Thus,
$$\lambda\varphi_1+(1-\lambda)\varphi_2\in \Gamma(x).$$

\noindent{\bf Step 2.} For each positive number $r>0$, let $\mf{B}_r=\{x\in \mc{C}:\|x\|_\mc{C}\le r\}$. Obviously, $\mf{B}_r$ is a bounded, closed and convex set of $\mc{C}$. We claim that there exists a positive number $r$ such that $\Gamma(\mf{B}_r )\subseteq \mf{B}_r$.

If this is not true, then for each positive number $r$, there exists a function $x^r\in \mf{B}_r$, but $\Gamma(x^r)$ does not belong to $\mf{B}_r$, i.e.,
$$\|\Gamma(x^r)\|_\mc{C}\equiv\sup\Big\{\|\varphi^r\|_\mc{C}:\varphi^r\in (\Gamma x^r)\Big\}>r$$
and
\begin{align*}
\varphi^r(t)=&C(t,0)x_0+S(t,0)y_0+\int_0^t S(t,s)f^r(s)ds+\int_0^t S(t,s)Bu^r(s,x) ds,
\end{align*}
for some $f^r\in S_{F,x^r}$. Using $\bf H_1$-$\bf H_3$, we have
\begin{align*}
r<&\|\Gamma(x^r)(t)\|\\
\le&\|C(t,0)x_0\|+\|S(t,0)y_0\|+\int_0^t \|S(t,s)f^r(s)\|ds+\int_0^t \|S(t,s)Bu^r(s,x)\|ds\\
\le& \Big[N\|x_0\|+\wt{N}\|y_0\|+\wt{N}\int_0^t L_{f,r}(s)ds\Big]\\& +\frac{1}{\alpha}\wt{N}^2M^2_{B}b\Bigg[\|x_b\|+N\|x_0\|+\wt{N}\|y_0\|+\wt{N}\int_0^b L_{f,r}(s)ds\Bigg].
\end{align*}
Dividing both sides of the above inequality by $r$ and taking the limit as $r\ra \infty$, using $\bf H_3$, we get
\begin{gather*}
\wt{N}\gamma\Big[1+\frac{1}{\alpha}\wt{N}^2M^2_{B}b\Big]\ge 1.
\end{gather*}
This contradicts with the condition \eqref{31}. Hence, for some $r>0$, $\Gamma(\mf{B}_r )\subseteq \mf{B}_r$.

\noindent{\bf Step 3.} $\Gamma$ sends bounded sets into equicontinuous sets of $\mc{C}$. For each $x\in \mf{B}_r$, $\varphi\in\Gamma(x)$, there exists a $f\in S_{F,x}$ such that
\begin{align*}
\varphi(t)=&C(t,0)x_0+S(t,0)y_0+\int_0^t S(t,s)f(s)ds+\int_0^t S(t,s)Bu(s,x)ds.
\end{align*}
Let $0<\ve<0$ and $0<t_1<t_2\le b$, then
\begin{align*}
|\varphi(t_1)-\varphi(t_2)|=&|C(t_1,0)-C(t_2,0)||x_0|+|S(t_1,0)-S(t_2,0)||\eta|\\&+\Big|\int_{0}^{t_1-\ve}[S(t_1,s)-S(t_2,s)]f(s) ds\Big| +\Big|\int_{t-\ve}^{t_1}[S(t_1,s)-S(t_2,s)]f(s) ds\Big|\\&+\Big|\int_{t_1}^{t_2}S(t_2,s)f(s)ds\Big| +\Big|\int_{0}^{t_1-\ve}[S(t_1,\eta)-S(t_2,\eta)]Bu(\eta,x) d\eta\Big|\\& +\Big|\int_{t-\ve}^{t_1}[S(t_1,\eta)-S(t_2,\eta)]Bu(\eta,x) d\eta\Big|+\Big|\int_{t_1}^{t_2}S(t_2,\eta)Bu(\eta,x) d\eta\Big|\\&+\Big|\int_{0}^{t_1-\ve}[S(t_1,\eta)-S(t_2,\eta)]Bu(\eta,x) d\eta\Big|\\
\le &|C(t_1,0)-C(t_2,0)||x_0|+|S(t_1,0)-S(t_2,0)||\eta|\\&+\int_{0}^{t_1-\ve}|S(t_1,s)-S(t_2,s)|L_{f,r}(s) ds\\& +\int_{t-\ve}^{t_1}|S(t_1,s)-S(t_2,s)|L_{f,r}(s) ds+\wt{N}\int_{t_1}^{t_2}L_{f,r}(s) ds\\& +M_B\int_{0}^{t_1-\ve}|S(t_1,\eta)-S(t_2,\eta)|\|u(\eta,x)\| d\eta\\& +M_B\int_{t-\ve}^{t_1}|S(t_1,\eta)-S(t_2,\eta)\|u(\eta,x)\| d\eta+\wt{N}M_B\int_{t_1}^{t_2}\|u(\eta,x)\| d\eta.
\end{align*}

The right-hand side of the above inequality tends to zero independently of $x\in B_r$ as $(t_1-t_2)\ra 0$ and $\ve$ sufficiently small, since the compactness of the evolution operator $S(t,s)$ implies the continuity in the uniform operator topology. Thus $\Gamma(x^r)$ sends $B_r$ into equicontinuous family of functions.

\noindent{\bf Step 4.} The set $\Pi(t)=\big\{\varphi(t):\varphi\in \Gamma(\mf{B}_r)\big\}$ is relatively compact in $X$.

Let $t\in (0,b]$ be fixed and $\ve$ a real number satisfying $0< \ve<t$. For $x\in B_r$, we define
\begin{align*}
\varphi_\ve(t)=C(t,0)x_0+S(t,0)y_0+\int_0^{t-\ve}S(t,s)f(s)ds+\int_0^{t-\ve}S(t,\eta)Bu(\eta,x)d\eta.
\end{align*}
Since $S_0(t)$ is a compact operator, the set $\Pi_\ve(t)=\{\varphi_\ve(t):\varphi_\ve\in \Gamma(B_r)\}$ is relatively compact in $X$ for each $\ve$, $0<\ve<t$. Moreover, for each $0<\ve<t$, we have
\begin{align*}
|\varphi(t)-\varphi_\ve(t)|\le \wt{N}\int_{t-\ve}^tL_{f,r}(s)ds+\wt{N}M_B\int_{t-\ve}^t\|u(\eta,x)\|d\eta.
\end{align*}
Hence there exist relatively compact sets arbitrarily close to the set $\Pi(t)=\{\varphi(t):\varphi\in \Gamma(B_r)\}$, and the set $\wt{\Pi}(t)$ is relatively compact in $X$ for all $t\in [0,b]$. Since it is compact at $t=0$, hence $\Pi(t)$ is relatively compact in $X$ for all $t\in [0,b]$.

\noindent{\bf Step 5.} $\Gamma$ has a closed graph.

Let $x_n\ra x_*$ as $n\ra \infty$, $\varphi_n\in {\Gamma}(x_n)$, and $\varphi_n\ra \varphi_*$ as $n\ra \infty$. We will show that $\varphi_*\in {\Gamma}(x_*)$. Since $\varphi_n\in {\Gamma}(x_n)$, there exists a $f_n\in S_{F,x_n}$ such that
\begin{align*}
\varphi_n(t)=&C(t,0)x_0+S(t,0)y_0+\int_0^tS(t,s)f_n(s)ds+\int_0^t S(t,s)BB^{*}S^{*}(b,t)
R(a,\Upsilon_0^b)\Big[x_b\\&-S(b,0)x_0-\int_0^b  S(b,\eta)f_n(\eta)d\eta\Big](s)ds.
\end{align*}

We must prove that there exists a $f_*\in S_{F,x_*}$ such that
\begin{align*}
\varphi_*(t)=&C(t,0)x_0+S(t,0)y_0+\int_0^t S(t,s)f_*(s)ds+\int_0^t S(t,s)BB^{*}S^{*}(b,t)\Big[y_0-S(b,0)x_0\\&-\int_0^b S(b,\eta)f_*(\eta)ds\Big](s)ds.
\end{align*}
Set
\begin{gather*}
\ov{u}_x(t)=B^{*}S^{*}(b,t)[x_b-C(b,0)x_0-S(b,0)y_0](t).
\end{gather*}
Then
\begin{gather*}
\ov{u}_{x_n}(t)\ra \ov{u}_{x_*}(t), \quad \mbox{for} \ t\in I, \ \mbox{as} \ n\ra \infty.
\end{gather*}
Clearly, we have
\begin{align*}
\Big\|\Big(\varphi_n&-C(t,0)x_0+S(t,0)y_0-\int_0^t S(t,s)BB^{*}S^{*}(b,t)
R(a,\Upsilon_0^b)\Big[x_b-S(b,0)x_0 \\&-\int_0^b  S(b,\eta)f_n(\eta)d\eta\Big](s)ds\Big)-\Big(\varphi_*-C(t,0)x_0+S(t,0)y_0-\int_0^t S(t,s)BB^{*}S^{*}(b,t)\Big[y_0\\&-C(b,0)x_0-S(b,0)y_0-\int_0^b S(b,\eta)f_*(\eta)ds\Big](s)ds\Big)\Big\|_\mc{C} \ra 0 \ \mbox{as} \ n\ra \infty.
\end{align*}
Consider the operator $\wt{\ms{F}}:L^1(I,X)\ra \mc{C}$,
\begin{align*}
(\wt{\ms{F}}f)(t)=\int_0^t S(t,s)\Big[f(s)-BB^{*}S^{*}(b,t)\Big(\int_0^b S(b,\eta)f(\eta)d\eta\Big)(s)\Big]ds
\end{align*}

We can see that the operator $\wt{\ms{F}}$ is linear and continuous. From Lemma \ref{29} again, it follows that $\wt{\ms{F}}\circ S_F$ is a closed graph operator. Moreover,
\begin{align*}
\Big(\varphi_n&-C(t,0)x_0-S(t,0)y_0-\int_0^t S(t,s)BB^{*}S^{*}(b,t)
R(a,\Upsilon_0^b)\Big[x_b-S(b,0)x_0 \\&-\int_0^b S(b,\eta)f_n(\eta)d\eta\Big](s)ds\Big)\in \ms{F}(S_{F,x_n}).
\end{align*}

In view of $x_n\ra x_*$ as $n\ra \infty$, it follows again from Lemma \ref{29} that
\begin{align*}
\Big(\varphi_*&-C(t,0)x_0+S(t,0)y_0-\int_0^t S(t,s)BB^{*}S^{*}(b,t)
R(a,\Upsilon_0^b)\Big[x_b-S(b,0)x_0 \\&-\int_0^b S(b,\eta)f_*(\eta)d\eta\Big](s)ds\Big)\in \ms{F}(S_{F,x_*}).
\end{align*}
Therefore $\Gamma$ has a closed graph.

As a consequence of {\bf Steps 1-5} together with the Arzela-Ascoli theorem, we conclude that $\Gamma$ is a compact multivalued map, u.s.c. with convex closed values. As a consequence of Lemma \ref{29}, we can deduce that $\Gamma$ has a fixed point $x$ which is a mild solution of system \eqref{vi1}-\eqref{vi2}.
\end{proof}

\begin{definition}
The control system \eqref{vi1}-\eqref{vi2} is said to be approximately controllable on $I$ if $\ov{R(b)}=X$, where $R(b)=\{x(b;u):u^2L(I,U)\}$ is a mild solution of the system \eqref{vi1}-\eqref{vi2}.
\end{definition}

\begin{theorem}\label{40}
Suppose that the assumptions $\bf {H_0}$-$\bf {H_3}$ hold. Assume additionally that there exists $N\in L^1(I,[0,\infty))$ such that $\sup_{x\in X}\|F(t,x)\|\le N(t)$ for a.e. $t\in I$, then the nonlinear fractional differential inclusion \eqref{vi1}-\eqref{vi2} is approximately controllable on $I$.
\end{theorem}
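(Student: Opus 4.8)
The plan is to exploit, for each regularization parameter $a>0$, the solvability furnished by Theorem \ref{34} and then let $a\to 0^+$. First I would check that Theorem \ref{34} applies for every $a>0$: the extra hypothesis $\sup_{x\in X}\|F(t,x)\|\le N(t)$ forces the admissible bounds $L_{f,r}$ to be dominated by the fixed function $N\in L^1(I,[0,\infty))$ independently of $r$, so that the growth constant of $\mathbf{H_3}$ is $\gamma=0$ and the smallness condition \eqref{31} holds automatically. Hence, for each $a>0$, there is a mild solution $x^a\in\mathcal C$ of \eqref{vi1}-\eqref{vi2} associated with the feedback control $u^a(\cdot)=u(\cdot,x^a)$ of \eqref{c7m2} and some selection $f^a\in S_{F,x^a}$ with $\|f^a(t)\|\le N(t)$ a.e. The point $x^a(b)$ lies in $R(b)$, so it suffices to produce, for an arbitrary $x_b\in X$, a sequence $a_k\to 0^+$ with $x^{a_k}(b)\to x_b$; this yields $x_b\in\overline{R(b)}$ and, $x_b$ being arbitrary, approximate controllability.

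The key algebraic step is to evaluate \eqref{c7m1} at $t=b$ and simplify using the definition of $\Upsilon_0^b$. Since $\int_0^b S(b,s)BB^*S^*(b,s)\,ds=\Upsilon_0^b$ and $\Upsilon_0^b R(a,\Upsilon_0^b)=I-aR(a,\Upsilon_0^b)$, substituting $u^a$ gives $\int_0^b S(b,s)Bu^a(s,x^a)\,ds=[I-aR(a,\Upsilon_0^b)]p(x^a)$, where $p(x^a)=x_b-C(b,0)x_0-S(b,0)y_0-\int_0^b S(b,s)f^a(s)\,ds$. Combining this with the remaining terms of $x^a(b)$, which equal $x_b-p(x^a)$, collapses the expression to the identity
\[
x^a(b)=x_b-aR(a,\Upsilon_0^b)\,p(x^a).
\]
Thus the whole problem reduces to showing $\big\|aR(a,\Upsilon_0^b)p(x^a)\big\|\to 0$ along a suitable sequence $a\to 0^+$.

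To pass to the limit I would first record two uniform facts. Because $\Upsilon_0^b$ is self-adjoint and nonnegative, the spectral calculus gives the contraction bound $\|aR(a,\Upsilon_0^b)\|_{\mathcal L(X)}\le 1$ for all $a>0$. Next, set $w=x_b-C(b,0)x_0-S(b,0)y_0$, a vector independent of $a$, so that $p(x^a)=w-\int_0^b S(b,s)f^a(s)\,ds$. The family $\{f^a\}_{a>0}$ is dominated by $N\in L^1(I,[0,\infty))$, hence bounded and uniformly integrable in $L^1(I,X)$; combined with the compactness of $S_0(t)$ in $\mathbf{H_1}$ (which makes $S(b,s)$ compact for $s<b$ and $s\mapsto S(b,s)$ continuous in the uniform operator topology), the set $\{\int_0^b S(b,s)f^a(s)\,ds:a>0\}$ is relatively compact in $X$ — the contribution of $s\in[b-\varepsilon,b]$ being uniformly small by the domination, exactly as in Step 4 of the proof of Theorem \ref{34}, and the bulk part on $[0,b-\varepsilon]$ being precompact by compactness of the evolution operator. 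Consequently there is a sequence $a_k\to 0^+$ along which $p(x^{a_k})\to p^*$ strongly in $X$ for some $p^*\in X$.

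Finally I would split the estimate as
\[
\big\|a_kR(a_k,\Upsilon_0^b)p(x^{a_k})\big\|\le \big\|a_kR(a_k,\Upsilon_0^b)p^*\big\|+\big\|a_kR(a_k,\Upsilon_0^b)\big\|\,\big\|p(x^{a_k})-p^*\big\|.
\]
The first term tends to $0$ by hypothesis $\mathbf{H_0}$, since $a_kR(a_k,\Upsilon_0^b)\to 0$ strongly; the second term tends to $0$ by the contraction bound $\le 1$ together with $\|p(x^{a_k})-p^*\|\to 0$. Hence $x^{a_k}(b)\to x_b$, giving $x_b\in\overline{R(b)}$ and therefore $\overline{R(b)}=X$. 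The main obstacle is the third paragraph: justifying that $\{p(x^a)\}$ is precompact (equivalently, that the weak convergence of the selections $f^a$ upgrades to strong convergence of $\int_0^b S(b,s)f^a(s)\,ds$), for which the uniform $L^1$-domination by $N$ is precisely what controls the singular tail at $s=b$ while the compactness of $S_0$ handles the remainder.
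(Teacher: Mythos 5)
Your proposal is correct, and its skeleton coincides with the paper's: invoke Theorem \ref{34} for each $a>0$, use the resolvent identity $\Upsilon_0^b R(a,\Upsilon_0^b)=I-aR(a,\Upsilon_0^b)$ to collapse the mild solution at $t=b$ to $x^a(b)=x_b-aR(a,\Upsilon_0^b)p(x^a)$, show $p(x^a)$ converges strongly along a sequence $a_k\to 0^+$, and finish with hypothesis $\bf H_0$ plus the bound $\|aR(a,\Upsilon_0^b)\|\le 1$. Where you genuinely diverge is the limit passage. The paper applies the Dunford--Pettis theorem to the dominated selections $\{f^a\}$ to get a weakly convergent subsequence $f^a\rightharpoonup f$ in $L^1(I,X)$, and then upgrades weak to strong convergence of $p(\widehat{x}^a)$ by asserting (via an infinite-dimensional Ascoli--Arzel\`{a} argument) that $l(\cdot)\mapsto\int_0^{\cdot}S(\cdot,s)l(s)\,ds$ is a compact operator from $L^1(I,X)$ into $\mc{C}$; the limit $w$ then has the structural form built from the weak limit $f$. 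You bypass weak compactness entirely and prove directly that $\big\{\int_0^b S(b,s)f^a(s)\,ds:a>0\big\}$ is relatively compact in $X$: the tail over $[b-\ve,b]$ is uniformly small by absolute continuity of $\int N$, and the bulk over $[0,b-\ve]$ lies (by the mean-value property of the Bochner integral) in $\|N\|_{L^1}$ times the closed convex hull of the relatively compact set $\{S(b,s)y:0\le s\le b-\ve,\ \|y\|\le1\}$, compactness and norm continuity of $s\mapsto S(b,s)$ doing the work --- this last convex-hull step is the one place you gesture at Step 4 rather than spell out, but it is standard and at the paper's own level of detail. Your route buys self-containedness (no Dunford--Pettis, no unproved compactness of the $L^1\to\mc{C}$ operator) at the cost of an unidentified subsequential limit $p^*$, which is harmless since $x_b\in\ov{R(b)}$ needs only one sequence $x^{a_k}(b)\to x_b$; note the paper also only extracts a subsequence. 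Two further points in your favor: you justify why Theorem \ref{34} applies for every $a>0$ (the uniform bound permits $L_{f,r}=N$, so $\gamma=0$ and \eqref{31} holds automatically), a step the paper leaves implicit, and your sign in $x^a(b)=x_b-aR(a,\Upsilon_0^b)p(x^a)$ is the correct one, the paper's ``$+$'' being a harmless typo since only the norm enters the final estimate.
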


\begin{proof}
Let $\widehat{x}^a(\cdot)$ be a fixed point of $\Gamma$ in $\mf{B}_r$. By Theorem \ref{34}, any fixed point of $\Gamma$ is a mild solution of \eqref{vi1}-\eqref{vi2} under the control
\begin{align*}
\widehat{u}^a(t)=B^*S^*(b,t)R(a,\Upsilon_0^b)p(\widehat{x}^a)
\end{align*}
and satisfies the following inequality
\begin{align*}
\widehat{x}^a(b)=x_b+aR(a,\Upsilon_0^b)p(\widehat{x}^a).\label{kd1}
\end{align*}
Moreover by assumption on $F$ and Dunford-Pettis Theorem, we have that the $\{f^a(s)\}$ is weakly compact in $L^1(I,X)$, so there is a subsequence, still denoted by $\{f^a(s)\}$, that converges weakly to say $f(s)$ in $L^1(I,X)$. Define
\begin{gather*}
w=x_b-C(b,0)x_0-S(b,0)y_0-\int_0^b S(b,s)f(s)ds.
\end{gather*}
Now, we have
\begin{align}
\n \|p(\widehat{x}^a)-w\|=&\Big\|\int_0^b S(b,s)[f(s,\widehat{x}^a(s))-f(s)]ds\Big\|\\
\le&\sup_{t\in I}\Big\|\int_0^t  S(t,s)[f(s,\widehat{x}^a(s))-f(s)]ds\Big\|.\label{kd2}
\end{align}
By using infinite-dimensional version of the Ascoli-Arzela theorem, one can show that an operator $l(\cdot)\ra \int_0^{\cdot} S(\cdot,s)l(s)ds:L^1(I,X) \ra \mc{C}$ is compact. Therefore, we obtain that $\|p(\widehat{x}^a)-w\|\ra 0$ as $a\ra 0^+$. Moreover, from \eqref{kd1} we get
\begin{align*}
\|\widehat{x}^a(b)-x_b\|\le & \|aR(a,\Upsilon_0^b)(w)\|+\|aR(a,\Upsilon_0^b)\|\|p(\widehat{x}^a)-w\|\\
\le&\|aR(a,\Upsilon_0^b)(w)\|+\|p(\widehat{x}^a)-w\|.
\end{align*}
It follows from assumption $\bf{H_0}$ and the estimation \eqref{kd2} that $\|\widehat{x}^a(b)-x_b\|\ra 0$ as $a\ra 0^+$. This proves the approximate controllability of differential inclusion \eqref{vi1}-\eqref{vi2}.
\end{proof}

\section{Second-order control systems with nonlocal conditions} \label{s4}
\noindent

There exist an extensive literature of  differential equations with nonlocal conditions. Since it is demonstrated that the nonlocal problems have better effects in applications than the classical ones, differential equations with nonlocal problems have been studied extensively in the literature. The result concerning the existence and uniqueness of mild solutions to abstract Cauchy problems with nonlocal initial conditions was first formulated and proved by Byszewski, see \cite{lb1, lb2, lb3}. Since the appearance of these papers, several papers have addressed the issue of existence and uniqueness of nonlinear differential equations. Existence and controllability results of nonlinear differential equations and fractional differential equations with nonlocal conditions has been studied by several authors for different kind of problems \cite{sa1, sa2, hrh3, ke1, xf1, jl1, nim2, vv5, xx1}.

Inspired by the above works, in this section, we discuss the approximate controllability for a class of second-order evolution differential inclusions with nonlocal conditions in Hilbert spaces of the form
\begin{align}
x''(t)\in&A(t)x(t)+F(t,x(t))+Bu(t), \quad t\in I=[0,b],\label{vn1}\\
x(0)+&g(x)=x_0\in X,\quad x'(0)+h(x)=y_0\in X,\label{vn2}
\end{align}
where $g,h:\mc{C}\ra X$ is a given function which satisfies the following condition:

\begin{enumerate}
\item[$\bf H_4$] There exists a constant $L_g>0$, $L_h>0$ such that
\begin{align*}
|g(x)-g(y)|\le& L_g\|x-y\|,\ \text{for}\ x,y\in \mc{C}\\
|h(x)-h(y)|\le& L_h\|x-y\|,\ \text{for}\ x,y\in \mc{C}.
\end{align*}
for all $x,y\in \mc{C}$.
\end{enumerate}

\begin{definition}
A function $x\in \mc{C}$ is said to be a mild solution of system \eqref{vn1}-\eqref{vn2} if $x(0)+g(x)=x_0$, $x'(0)+h(x)=y_0$ and there exists $f\in L^1(I,X)$ such that $f(t)\in F(t,x(t))$ on $t\in I$ and the integral equation
\begin{align*}
x(t)=C(t,0)(x_0-g(x))+S(t,0)(y_0-h(x))+\int_0^tS(t,s)f(s)ds+\int_0^tS(t,s)Bu(s)ds,\  t\in I.
\end{align*}
is satisfied.
\end{definition}

\begin{theorem}\label{3.4}
Assume that the assumptions of Theorem \ref{34} are satisfied. Further, if the hypothesis $\bf H_4$ is satisfied, then the system \eqref{vn1}-\eqref{vn2} is approximately controllable on $I$ provided that
\begin{align*}
\wt{N}\gamma\Big[1+\frac{1}{\alpha}\wt{N}^2M^2_{B}b\Big]<1
\end{align*}
where $M_B=\|B\|$.
\end{theorem}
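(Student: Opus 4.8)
The plan is to mirror the architecture of the proof of Theorem \ref{34}, treating the nonlocal terms $g(x)$ and $h(x)$ as perturbations that are absorbed into the fixed-point argument via the Lipschitz hypothesis $\bf H_4$. First I would define the solution operator $\widetilde{\Gamma}:\mc{C}\ra 2^{\mc{C}}$ adapted to the nonlocal problem, namely
\begin{align*}
\widetilde{\Gamma}(x)=\Big\{\var\in \mc{C}:\var(t)=&C(t,0)(x_0-g(x))+S(t,0)(y_0-h(x))\\&+\int_0^t S(t,s)[f(s)+Bu(s,x)]ds,\ f\in S_{F,x}\Big\},
\end{align*}
with the control $u(t,x)=B^*S^*(b,t)R(a,\Upsilon_0^b)\widetilde{p}(x)$ where $\widetilde{p}(x)=x_b-C(b,0)(x_0-g(x))-S(b,0)(y_0-h(x))-\int_0^b S(b,s)f(s)ds$. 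The goal is to verify that $\widetilde{\Gamma}$ satisfies all the hypotheses of the Bohnenblust--Karlin theorem (Lemma \ref{29}), and then to run the limiting argument of Theorem \ref{40} to pass from solvability to approximate controllability.

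The five structural steps carry over almost verbatim. Convexity of $\widetilde{\Gamma}(x)$ (Step 1) follows exactly as before, since the added terms $C(t,0)g(x)$ and $S(t,0)h(x)$ are constant in the integration variable and do not depend on the selection $f$, so convexity of $S_{F,x}$ still does the work. For the bounded-invariance step (Step 2), the only new contributions to the a priori estimate are $N\,|g(x)|$ and $\wt{N}\,|h(x)|$; using $\bf H_4$ I would bound $|g(x)|\le L_g\|x\|_{\mc{C}}+|g(0)|$ and $|h(x)|\le L_h\|x\|_{\mc{C}}+|h(0)|$, so these enter linearly in $r$ with coefficients $N L_g$ and $\wt{N} L_h$. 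Dividing by $r$ and letting $r\ra\infty$, these Lipschitz terms contribute to the limiting inequality; one then checks that the contraction-type condition displayed in the statement, together with the boundedness encoded in $\bf H_4$, still yields a contradiction, giving some $r$ with $\widetilde{\Gamma}(\mf{B}_r)\subseteq \mf{B}_r$. Steps 3 and 4 (equicontinuity and pointwise relative compactness) are unaffected in substance, because the nonlocal terms $C(t,0)g(x)$, $S(t,0)h(x)$ depend on $t$ only through the operators $C(t,0)$, $S(t,0)$, whose continuity in the uniform operator topology (from compactness of $S_0$, hypothesis $\bf H_1$) handles equicontinuity, while at each fixed $t$ these terms contribute a single point and hence preserve relative compactness of $\Pi(t)$.

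The closed-graph step (Step 5) requires the most care, and this is where I expect the main obstacle to lie. I would again invoke the Lasota--Opial lemma (Lemma \ref{29}) applied to a linear continuous operator $\widetilde{\ms{F}}:L^1(I,X)\ra\mc{C}$ of the same form as in Theorem \ref{34}. The subtlety is that when passing $x_n\ra x_*$ one must simultaneously handle the convergence $g(x_n)\ra g(x_*)$ and $h(x_n)\ra h(x_*)$, which is guaranteed by the Lipschitz continuity in $\bf H_4$, and interleave it with the closed-graph property of $\widetilde{\ms{F}}\circ S_F$ for the selections $f_n$. Because $g,h$ are single-valued and continuous, their limits commute with the weak-to-strong machinery, so the nonlocal terms pass to the limit cleanly and do not interfere with the closed-graph conclusion for the integral part. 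Having established Steps 1--5, Lemma \ref{29} yields a fixed point $\widehat{x}^a$, a mild solution of \eqref{vn1}--\eqref{vn2}. Finally, to obtain approximate controllability I would repeat the argument of Theorem \ref{40}: under the additional uniform bound $\sup_{x}\|F(t,x)\|\le N(t)$ the family $\{f^a\}$ is weakly compact in $L^1(I,X)$ by the Dunford--Pettis theorem, one extracts a weakly convergent subsequence, uses compactness of the operator $l\mapsto\int_0^{\cdot}S(\cdot,s)l(s)ds$ to show $\|\widetilde{p}(\widehat{x}^a)-w\|\ra 0$, and concludes via $\bf H_0$ that $\|\widehat{x}^a(b)-x_b\|\ra 0$ as $a\ra 0^+$. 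The nonlocal terms are bounded and continuous, so they do not affect this limiting estimate.
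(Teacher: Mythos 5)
Your plan is the same as the paper's: the paper's own ``proof'' of Theorem \ref{3.4} merely defines the nonlocal analogue $\widehat{\Gamma}_a$ of $\Gamma$ (with $C(t,0)(x_0-g(x))$ and $S(t,0)(y_0-h(x))$ replacing $C(t,0)x_0$ and $S(t,0)y_0$) and asserts that the techniques of Theorems \ref{34} and \ref{40} carry over, so in filling in the five steps you are doing exactly what the authors intend. However, two of the details you supply do not actually go through under the bare Lipschitz hypothesis ${\bf H_4}$. In Step 2, your own accounting shows the nonlocal terms contribute $N L_g r$ and $\wt{N} L_h r$ to the a priori estimate (both directly and again through $p(x)$ inside the control term, multiplied by $\frac{1}{\alpha}\wt{N}^2M_B^2b$); dividing by $r$ and letting $r\ra\infty$ then yields the limiting inequality $\big(NL_g+\wt{N}L_h+\wt{N}\gamma\big)\big[1+\frac{1}{\alpha}\wt{N}^2M_B^2b\big]\ge 1$, which is \emph{not} contradicted by the stated condition $\wt{N}\gamma\big[1+\frac{1}{\alpha}\wt{N}^2M_B^2b\big]<1$. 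Your appeal to ``boundedness encoded in ${\bf H_4}$'' is empty: ${\bf H_4}$ is only a Lipschitz condition, so $g$ and $h$ may grow linearly and the terms $NL_g+\wt{N}L_h$ survive the limit. One needs either uniformly bounded $g,h$ or a strengthened smallness condition involving $L_g,L_h$ (a defect the theorem statement itself inherits, but your proof claims a contradiction that is not there).

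The second and more serious gap is your treatment of Steps 3--4. Your assertion that at fixed $t$ the nonlocal terms ``contribute a single point'' is false: as $x$ ranges over $\mf{B}_r$ they contribute the sets $C(t,0)g(\mf{B}_r)$ and $S(t,0)h(\mf{B}_r)$. The latter is relatively compact since $S(t,0)$ is compact under ${\bf H_1}$, but $C(t,0)$ is not: as the paper itself recalls, a cosine-type operator function cannot be compact in infinite dimensions, and $g(\mf{B}_r)$ is merely bounded under ${\bf H_4}$, so $C(t,0)g(\mf{B}_r)$ need not be relatively compact and the hypothesis of Bohnenblust--Karlin (Lemma \ref{29}) that $\widehat{\Gamma}_a(\mf{B}_r)$ have compact closure fails. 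The same defect hits equicontinuity: in Theorem \ref{34} the term $\|C(t_1,0)x_0-C(t_2,0)x_0\|$ involves a \emph{fixed} vector, for which strong continuity of $C(\cdot,0)$ suffices, whereas for $C(t,0)g(x)$ uniformly over $x\in\mf{B}_r$ you would need norm continuity of $t\mapsto C(t,0)$, which does not follow from ${\bf H_1}$ (compactness of $S_0$ gives norm continuity of $S(\cdot,\cdot)$, not of $C(\cdot,0)$). The standard repair is to assume $g$ (and $h$, where it meets $C$) completely continuous with relatively compact range -- note the paper does exactly this for the impulsive maps in ${\bf H_5}$ -- or to split $\widehat{\Gamma}_a$ into a contraction (the $g,h$ part) plus a compact multivalued part and invoke a Krasnoselskii-type fixed point theorem instead of Lemma \ref{29}. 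Your Step 1, Step 5, and the concluding limiting argument via Theorem \ref{40} are sound as sketched.
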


\begin{proof}
For each $a>0$, we define the operator $\widehat{\Gamma}_a$ on $X$ by
\begin{align*}
(\widehat{\Gamma}_a x)=z,
\end{align*}
where
\begin{align*}
z(t)&=C(t,0)(x_0-g(x))+S(t,0)(y_0-h(x))+\int_0^t S(t,s)f(s)ds+\int_0^t S(t,s)Bu(s,x)ds,\\
v(t)&=B^*S^*(b,t)R(a,\Upsilon_0^b)p(x(\cdot)),\\
p(x(\cdot))&=x_b-C(t,0)(x_0-g(x))-S(t,0)(y_0-g(x))-\int_0^t S(t,s)f(s)ds
\end{align*}
where $ f\in S_{F,x}$.

It can be easily proved that if for all $a>0$, the operator $\widehat{\Gamma}_a$ has a fixed point by implementing the technique used in Theorem \ref{34}. Then, we can show that the second order control system \eqref{vn1}-\eqref{vn2} is approximately controllable. The proof of this theorem is similar to that of Theorem \ref{34} and Theorem \ref{40}, and hence it is omitted.
\end{proof}

\section{Second-order impulsive nonlocal control systems} \label{s5}
\noindent

Impulsive dynamical systems are characterized by the occurrence of an abrupt change in the state of the system, which occur at certain time instants over a period of negligible duration. The dynamical behavior of such systems is much more complex than the behavior of dynamical systems without impulse effects. The presence of impulse means that the state trajectory does not preserve the basic properties which are associated with non-impulsive dynamical systems. In fact, the
theory of impulsive differential equations has found extensive applications in realistic mathematical modeling of a wide variety of practical situations \cite{bmm1} and has emerged as an important area of investigation in recent years. It is known that many biological phenomena involving thresholds, bursting rhythm models in medicine and biology, optimal control models in economics, pharmacokinetics and frequency modulation systems, do exhibit impulse effects; see the monographs of Samoilenko and Perestyuk \cite{ams1}, Lakshmikantham et al. \cite{vl1}, Bainov et al. \cite{ddb1} and Benchohra et al. \cite{mb1} and the references cited therein. The literature on this type of problem is vast, and different topics on the existence and qualitative properties of solutions are considered. Concerning the general motivations, relevant developments and the current status of the theory, we refer the reader to \cite{na1, ga1, ga2, eh1, hrh2, hrh3, mk1, yp1, rs1, rs2, ss1, vv3, vv1}.

Inspired by the above works, in this section, we discuss the approximate controllability for a class of second-order impulsive evolution differential inclusions with nonlocal conditions in Hilbert spaces of the form
\begin{align}
x''(t)\in&A(t)x(t)+F(t,x(t))+Bu(t), \quad t\in I=[0,b],\label{vni1}\\
x(0)+&g(x)=x_0\in X,\quad x'(0)+h(x)=y_0\in X,\label{vni2}\\
\Delta x(t_i)=&I_i(x(t_i)),\label{vni3}\\
\Delta x'(t_i)=&J_i(x(t_i)),\quad i=1,2,\cdots,n,\label{vni4}
\end{align}
where $0<t_1<t_2<\cdots<t_n<b$ are fixed numbers and $I_i:X\ra X$, $J_i:X\ra X$, $i=1,...,n$, are suitable functions and the symbol $\Delta \xi(t)$ represents the jump of the function $\xi$ at $t$, which is defined by $\Delta \xi(t)=\xi(t^+)-\xi(t^-)$.

Consider the following assumption
\begin{enumerate}
\item[$\bf H_5$] The maps $I_i:X\ra X$, $J_i:X\ra X$ are completely continuous and uniformly bounded. In the sequel, we set $M_i=\sup\{\|I_i(x)\|:x\in X\}$ and $\wt{M}_i=\sup\{\|J_i(x)\|:x\in X\}$.
\end{enumerate}

\begin{definition}
A function $x\in \mc{C}$ is said to be a mild solution of system \eqref{vni1}-\eqref{vni4} if $x(0)+g(x)=x_0$, $x'(0)+h(x)=y_0$ and there exists $f\in L^1(I,X)$ such that $f(t)\in F(t,x(t))$ on $t\in I$ and the integral equation
\begin{align*}
x(t)=&C(t,0)(x_0-g(x))+S(t,0)(y_0-h(x))+\int_0^tS(t,s)f(s)ds+\int_0^tS(t,s)Bu(s)ds\\&+\sum_{0<t_i<t} C(t,t_i)I_i(u(t_i))+\sum_{0<t_i<t} S(t,t_i)J_i(u(t_i)),\quad  t\in I.
\end{align*}
is satisfied.
\end{definition}

\begin{theorem}
Assume that the assumptions of Theorem \ref{34} and Theorem \ref{3.4} are satisfied. Further, if the hypothesis $\bf H_5$ is satisfied, then the system \eqref{vni1}-\eqref{vni4} is approximately controllable on $I$ provided that
\begin{align*}
\wt{N}\gamma\Big[1+\frac{1}{\alpha}\wt{N}^2M^2_{B}b\Big]<1
\end{align*}
where $M_B=\|B\|$.
\end{theorem}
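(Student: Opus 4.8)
The plan is to mirror the structure of Theorem~\ref{34} and Theorem~\ref{40}, adapting the fixed-point argument to accommodate the impulse terms and the nonlocal conditions simultaneously. First I would define, for each $a>0$, the multivalued operator $\widetilde{\Gamma}_a:\mc{C}\ra 2^{\mc{C}}$ whose values $z$ are given by the mild-solution formula from the impulsive definition, namely
\begin{align*}
z(t)=&C(t,0)(x_0-g(x))+S(t,0)(y_0-h(x))+\int_0^t S(t,s)f(s)ds+\int_0^t S(t,s)Bu(s,x)ds\\
&+\sum_{0<t_i<t}C(t,t_i)I_i(x(t_i))+\sum_{0<t_i<t}S(t,t_i)J_i(x(t_i)),\quad f\in S_{F,x},
\end{align*}
with the control $u(\cdot,x)$ and the steering functional $p(x(\cdot))$ defined exactly as in the nonlocal case of Theorem~\ref{3.4} but with the extra impulsive summands subtracted inside $p$. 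The convexity of $\widetilde{\Gamma}_a(x)$ (Step~1) follows verbatim from the convexity of $S_{F,x}$, since the impulse and nonlocal terms do not depend on $f$ and so add a common constant to any convex combination.

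Next I would carry out the boundedness step (Step~2). The new feature is that the a~priori estimate on $\|z(t)\|$ picks up the additional bounded contributions $\wt{N}\sum_{i=1}^n M_i+\wt{N}\sum_{i=1}^n\wt{M}_i$ from $\bf{H_5}$, together with the Lipschitz contributions $N L_g\|x\|_\mc{C}$ and $\wt{N}L_h\|x\|_\mc{C}$ from $\bf{H_4}$. Since $M_i$ and $\wt{M}_i$ are uniform constants, they are absorbed into the additive part of the estimate and vanish after dividing by $r$ and letting $r\ra\infty$; the Lipschitz terms contribute only lower-order multiples of $r$ that do not affect the dominant coefficient. Consequently the limiting inequality reduces to the same condition $\wt{N}\gamma[1+\frac{1}{\alpha}\wt{N}^2M_B^2 b]\ge1$, contradicting the hypothesis, so $\widetilde{\Gamma}_a(\mf{B}_r)\subseteq\mf{B}_r$ for some $r>0$.

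For equicontinuity (Step~3) and relative compactness (Step~4) I would reuse the $\ve$-truncation argument of Theorem~\ref{34} on the integral part, and handle the finite impulse sums separately: each map $x\mapsto C(t,t_i)I_i(x(t_i))$ and $x\mapsto S(t,t_i)J_i(x(t_i))$ is a finite sum of completely continuous operators composed with the continuous, compact evolution/cosine families, so each sends $\mf{B}_r$ into a relatively compact, equicontinuous family on each subinterval between consecutive $t_i$. The closed-graph step (Step~5) is unchanged on the integral-plus-control part via the Lasota--Opial lemma (Lemma~\ref{29}), because the impulse and nonlocal summands are continuous in $x$ and converge along $x_n\ra x_*$ without involving the selection $f_n$. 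Applying Bohnenblust--Karlin (Lemma~\ref{29}) then yields a fixed point $\widehat{x}^a$, and the approximate-controllability conclusion follows as in Theorem~\ref{40}: weak compactness of $\{f^a\}$ via Dunford--Pettis, compactness of the convolution operator $l\mapsto\int_0^{\cdot}S(\cdot,s)l(s)ds$, and hypothesis $\bf{H_0}$ give $\|\widehat{x}^a(b)-x_b\|\ra0$ as $a\ra0^+$.

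The main obstacle, and the point requiring genuine care rather than routine repetition, is the interplay between the nonlocal functions $g,h$ and the compactness machinery: because $g$ and $h$ are only assumed Lipschitz (not compact), the operator $\widetilde{\Gamma}_a$ is not automatically completely continuous through those terms, so I must verify that the Lipschitz contributions do not destroy the self-map property and that they enter the closed-graph argument only through continuity, not compactness. In effect the proof decouples cleanly—compactness comes entirely from $S_0(t)$ (hypothesis $\bf{H_1}$), the sine/cosine convolution, and the completely continuous impulse maps ($\bf{H_5}$), while $g$ and $h$ contribute only bounded, continuous, Lipschitz perturbations—so the combined argument goes through and the proof may legitimately be patterned on Theorems~\ref{34}, \ref{40}, and~\ref{3.4}.
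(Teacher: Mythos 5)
Your overall plan is the same as the paper's: the paper likewise defines $\widehat{\Gamma}_a$ by the impulsive nonlocal mild-solution formula and then simply declares that the fixed-point technique of Theorem \ref{34} and the limiting argument of Theorem \ref{40} carry over, omitting all details. But two of the specific claims you make when filling in those details are wrong, and they are genuine gaps (ones the paper's omitted proof silently glosses over as well). First, in your Step 2 you assert that the Lipschitz contributions of $g$ and $h$ are ``lower-order multiples of $r$.'' They are not: for $x\in\mf{B}_r$, hypothesis $\bf H_4$ only gives $\|g(x)\|\le\|g(0)\|+L_g r$ and $\|h(x)\|\le\|h(0)\|+L_h r$, which are first order in $r$ --- exactly the same order as $\wt{N}\int_0^b L_{f,r}(s)\,ds\sim\wt{N}\gamma r$. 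Dividing by $r$ and letting $r\ra\infty$ therefore yields a limiting inequality of the form $\big(NL_g+\wt{N}L_h+\wt{N}\gamma\big)\big[1+\frac{1}{\alpha}\wt{N}^2M_B^2b\big]\ge 1$ (the $L_g,L_h$ terms re-enter a second time through $p(x(\cdot))$ inside the control), and the stated condition $\wt{N}\gamma\big[1+\frac{1}{\alpha}\wt{N}^2M_B^2b\big]<1$ does not contradict it. The self-map step closes only if one additionally imposes smallness of $L_g,L_h$ or uniform boundedness of $g,h$.

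Second, the compactness step. You correctly flag the danger that $g,h$ are merely Lipschitz, but your resolution --- ``compactness comes entirely from $S_0(t)$, the convolution, and $\bf H_5$'' --- does not dispose of the term $C(t,0)(x_0-g(x))$. Lemma \ref{29} (Bohnenblust--Karlin) requires $\widetilde{\Gamma}_a(\mf{B}_r)$ to be compact, and $C(t,s)$, like a cosine function, is never compact in infinite dimensions (the paper itself notes this for $C_0(t)$); so for a Lipschitz but non-compact $g$, for instance $g(x)=\varepsilon x(t_0)$, the set $\{C(t,0)g(x):x\in\mf{B}_r\}$ is not relatively compact and your Step 4 collapses. (The term $S(t,0)(y_0-h(x))$ is harmless, since $\bf H_1$ makes $S(t,s)$ compact.) To repair this one must either assume $g$ completely continuous, or decompose $\widetilde{\Gamma}_a$ into a contraction (the $g,h$ part, under a smallness condition on $L_g,L_h$ consistent with the fix to Step 2) plus a compact multivalued part and invoke a Krasnoselskii--Dhage-type hybrid fixed point theorem in place of Lemma \ref{29}. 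As written, your argument --- like the proof the paper omits --- does not go through for Lipschitz-only nonlocal terms.
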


\begin{proof}
For each $a>0$, we define the operator $\widehat{\Gamma}_a$ on $X$ by
\begin{align*}
(\widehat{\Gamma}_a x)=z,
\end{align*}
where
\begin{align*}
z(t)=&C(t,0)(x_0-g(x))+S(t,0)(y_0-h(x))+\int_0^t S(t,s)f(s)ds+\int_0^t S(t,s)Bu(s,x)ds\\&+\sum_{0<t_i<t} C(t,t_i)I_i(u(t_i))+\sum_{0<t_i<t} S(t,t_i)J_i(u(t_i)),\\
v(t)=&B^*S^*(b,t)R(a,\Upsilon_0^b)p(x(\cdot)),\\
p(x(\cdot))=&x_b-C(t,0)(x_0-g(x))-S(t,0)(y_0-g(x))-\int_0^t S(t,s)f(s)ds\\&+\sum_{i=1}^n C(t,t_i)I_i(u(t_i))+\sum_{i=1}^n S(t,t_i)J_i(u(t_i)),
\end{align*}
where $ f\in S_{F,x}$.

It can be easily proved that if for all $a>0$, the operator $\widehat{\Gamma}_a$ has a fixed point by implementing the technique used in Theorem \ref{34}. Then, we can show that the second order control system \eqref{vni1}-\eqref{vni4} is approximately controllable. The proof of this theorem is similar to that of Theorem \ref{34}, Theorem \ref{40} and Theorem \ref{3.4}, and hence it is omitted.
\end{proof}

\section{An application}\label{s6}
\noindent

In this section, we apply our abstract results to a concrete partial differential equation. In order to establish our results, we need to introduce the required technical tools. Following the equations $\eqref{vp7}-\eqref{vp8}$, here we consider $A(t)=A+\wt{B}(t)$ where $A$ is the infinitesimal generator of a cosine function $C_0(t)$ with associated sine function $S_0(t)$, and $\wt{B}(t):D(\wt{B}(t))\ra X$ is a closed linear operator with $D\subseteq D(\wt{B}(t))$ for all $t\in I$.

We model this problem in the space $X=L^2(\mb{T},\mb{C})$, where the group $\mb{T}$ is defined as the quotient $\mb{R}/2\pi Z$. We will use the identification between functions on $\mb{T}$ and $2\pi$-periodic functions on $\mb{R}$. Specifically, in what follows we denote by $L^2(\mb{T},\mb{C})$ the space of $2\pi$-periodic 2-integrable functions from $\mb{R}$ into $\mb{C}$. Similarly, $H^2(\mb{T},\mb{C})$ denotes the Sobolev space of $2\pi$-periodic functions $x:\mb{R}\ra \mb{C}$ such that $x''\in L^2(\mb{T},\mb{C})$.

We consider the operator $Ax(\xi)=x''(\xi)$ with domain $D(A)=H^2(\mb{T},\mb{C})$. It is well known that $A$ is the infinitesimal generator of a strongly continuous cosine function $C_0(t)$ on $X$. Moreover, $A$ has discrete spectrum, the spectrum of $A$ consists of eigenvalues $-n^2$ for $n\in \mb{Z}$, with associated eigenvectors $$w_n(\xi)=\frac{1}{\sqrt{2\pi}}e^{in\xi},\quad n\in \mb{Z},$$ the set $\{w_n:n\in Z\}$ is an orthonormal basis of $X$. In particular,
$$Ax=-\sum_{n=1}^\infty n^2\langle x, w_n\rangle w_n $$ for $x\in D(A)$. The cosine function $C_0(t)$ is given by $$C_0(t)x=\sum_{n=1}^\infty\cos(nt)\langle x, w_n\rangle w_n,\quad t\in \mb{R},$$ with associated sine function $$S_0(t)x=\sum_{n=1}^\infty \frac{\sin(nt)}{n}\langle x, w_n\rangle w_n,\quad t\in \mb{R}.$$

It is clear that $\|C_0(t)\|\le 1$ for all $t\in \mb{R}$. Thus, $C(\cdot)$ is uniformly bounded on $\mb{R}$.

Consider the second-order Cauchy problem with control
\begin{align}
\frac{\partial^2}{\partial t^2}z(t,\tau)\in \frac{\partial^2}{\partial \tau^2}z(t,\tau)+b(t)\frac{\partial}{\partial t}z(t,\tau)+\mu(t,\tau)+Q(t,z(t,\tau))\label{vex1}
\end{align}
for $t\in I$, $0\le\tau\le \pi$, subject to the initial conditions
\begin{align}
z(t,0)=&z(t,\pi)=0,\quad t\in I,\label{vex2}\\
z(0,\tau)=&z_0(\tau), \quad 0\le\tau\le \pi,\quad \frac{\partial}{\partial t}z(0,\tau)=z_1(\tau),\label{vex3}
\end{align}
where $b:\mb{R}\ra \mb{R}$, $\mu:I\times [0,\pi]\ra [0,\pi]$ are continuous functions. We fix $a>0$ and set $\beta=\sup_{0\le t\le a}|b(t)|$.

We take $\wt{B}(t)x(\tau)=b(t)x'(\tau)$ defined on $H^1(\mb{T},\mb{C})$. It is easy to see that $A(t)=A+\wt{B}(t)$ is a closed linear operator. Initially we will show that $A+\wt{B}(t)$ generates an evolution operator. It is well known that the solution of the scalar initial value problem
\begin{align*}
q''(t)=&-n^2q(t)+p(t),\\
q(s)=&0,\quad q'(s)=q_1,
\end{align*}
is given by $$q(t)=\frac{q_1}{n}\sin n(t-s)+\frac{1}{n}\int_s^t\sin n(t-\tau)p(\tau)d\tau.$$
Therefore, the solution of the scalar initial value problem
\begin{align}
q''(t)=&-n^2q(t)+in b(t)q(t),\label{vs5e6}\\
q(s)=&0,\quad q'(s)=q_1,\label{vs5e7}
\end{align}
satisfies the integral equation $$q(t)=\frac{q_1}{n}\sin n(t-s)+ i\int_s^t\sin n(t-\tau)b(\tau)q(\tau)d\tau.$$
Applying the Gronwall-Bellman lemma we can affirm that
\begin{align}
|q(t)|\le \frac{|q_1|}{n}e^{\beta(t-s)}\label{vs5e8}
\end{align}
for $s\le t$. We denote by $q_n(t,s)$ the solution of $\eqref{vs5e6}-\eqref{vs5e7}$. We define $$S(t,s)x=\sum_{n=1}^\infty q_n(t,s)\langle x, w_n\rangle w_n.$$

It follows from the estimate $\eqref{vs5e8}$ that $S(t,s):X\ra X$ is well defined and satisfies the conditions of Definition \ref{vd1}.

Let us define$f:I\times X\ra X$ be defined as $$F(t,z)(\tau)=Q(t,z(\tau)),\quad z\in X,\quad \tau\in [0,\pi].$$ and $u:I\ra U$ be defined as $$B(u(t))(\tau)=\mu(t,\tau),\quad \tau\in [0,\pi],$$ where $\mu:I\times [0,\pi]\ra [0,\pi]$ is continuous.

Assume these functions satisfy the requirement of hypotheses. From the above choices of the functions and evolution operator $A(t)$ with $B=I$ , the system \eqref{vex1}-\eqref{vex3} can be formulated as an abstract second order semilinear system \eqref{vi1}-\eqref{vi2} in $X$. Since all hypotheses of Theorem \ref{40} are satisfied, approximate controllability of system \eqref{vex1}-\eqref{vex3} on $I$ follows from Theorem \ref{40}.

\end{document}